\def\vero{\textcolor{black}}
\def\veron{\textcolor{black}}
\def\veroo{\textcolor{black}}
 \numberwithin{equation}{section}
 \newtheorem{theorem}{Theorem}[section]
 \newtheorem{prop}[theorem]{Proposition}
 \newtheorem{lemma}[theorem]{Lemma}
 \newtheorem{remark}[theorem]{Remark}
 \newcommand{\R}{\ensuremath{\mathbb{R}}}
\renewcommand{\email}[2][]{%
	\ifx\emails\@empty\relax\else{\g@addto@macro\emails{,\space}}\fi%
	\@ifnotempty{#1}{\g@addto@macro\emails{\textrm{(#1)}\space}}%
	\g@addto@macro\emails{#2}%
}
	\title[Model-free selection for the mixing coefficient of a MM process]{\veron{A} Model-free selection criterion for \vero{the} mixing coefficient \vero{of} spatial max-mixture models}
\author{Abu-Awwad Abdul-Fattah}
\author{Maume-Deschamps V\'{e}ronique}
\author{Ribereau Pierre }
\address{Camille Jordan Institute - ICJ, Universit\'{e} Claude Bernard Lyon 1, France}
\address{Address for correspondence: A. Abu-Awwad, Department of Applied Mathematics, School of informatics and mathematics, Camille Jordan Institute - ICJ, Universit\'{e} Claude Bernard Lyon 1, 43 boulevard du 11 novembre 1918, F-69622 Villeurbanne Cedex, France.}
\email{abuawwad@math.univ-lyon1.fr, veronique.maume@univ-lyon1.fr, pierre.ribereau@univ-lyon1.fr}
\begin{document}

	\maketitle

	\begin{abstract}
		One of the main concerns in extreme value theory is to quantify the dependence between joint tails. Using stochastic processes that lack flexibility in the joint tail may lead to severe under- or 
		over-estimation of probabilities associated to simultaneous extreme events. Following recent advances in the literature, a flexible \vero{model called} max-mixture model has been introduced for 
		modeling situations where the extremal dependence structure may vary with \vero{the} distance. In this paper we propose a nonparametric model-free selection criterion for \vero{the} mixing 
		coefficient %to facilitate the modeling of the spatial data by a random field with approperiate extremal behaviour. 
		Our criterion is derived from a madogram, a \vero{notion} classically used in 
		geostatistics to capture spatial structures. \vero{The procedure is based on a} nonlinear least squares \vero{between the theoretical madogram and the empirical one}. \vero{We perform a simulation 
			study and apply our} criterion to daily precipitation over the East of Australia.\\
		
		Keywords: Extremal dependence; Madogram; Max-stable model; Max-mixture model; Nonlinear least squares.
	\end{abstract}

%%%%%%%%%%%%%%%%%%%%%%%%%%%%%%%%%%%%%%%%%%%

\section{Introduction}

Max-stable stochastic processes arise as a fundamental class of models that are able to describe spatial extreme value phenomena. Max-stable process models for spatial data were first constructed 
using the spectral representation \citep{de1984spectral}. Several subsequent works on the construction of spatial max-stable \vero{processes} have been \vero{developed, see e.g. } 
\citep{smith1990max,schlather2002models,kabluchko2009stationary,davison2012geostatistics}. The inference on \vero{spatial processes} is an 
open field that is still in development. Both parametric and nonparametric inference methods are used in the literature.\\
\vero{For a stationary spatial process $X=\{X(s)\/, s\in \mathcal{S}\}$, the Asymptotic Dependence (AD) is characterized  by $\chi (h)>0$, with:}

\begin{equation}
	\label{Asymptotic dep and indep}
	\chi(h)=\lim_{x\rightarrow\infty}\mathbb{P}\{X(s)>x | X(s+h)>x\}, \qquad s, s+h \in \mathcal{S}\/.
\end{equation}
\vero{This means that, for an AD process,}  a large event at location $s+h$ leads to a non-zero probability of a similarly large event at location $s$ for some spatial lag vector $h$. \vero{On one 
	other hand, a process is Asymptotically Independent (AI) if $\chi(h)=0$ for any $h$. This is achieved e.g. for Gaussian processes, see \citep{sibuya1960bivariate}. }% However, so-called asymptotic independence models such as 
%Gaussian or inverted max-stable models may be preferred in some applications when it seems that $\chi_{h}=0$. 
\\

Within \vero{the} class of \vero{max-stable} models, only two types of dependence structures are \vero{possible: either the process is AD or it is independent.} %admitted in their finite dimensional 
%distributions, the asymptotic dependence or exact independence. 
This restriction leads to a drawback of max-stable processes that they are too coarse to describe multivariate joint tails with 
asymptotic independence sufficiently accurately. Particularly, fitting asymptotic dependent models to asymptotically independent data leads over/under estimation of probabilities of extreme joint 
events, since there is a mis-placed assumption that the most extreme marginal events may occur simultaneously \citep{coles1999dependence}. \citep{thibaud2013threshold, davison2013geostatistics} introduced recent examples about practical difficulties to identify whether a data set should be modeled using an asymptotically dependent or asymptotically 
independent.\\

\citep{doi:10.1093/biomet/asr080} introduced a new class of models, so-called max-mixture models, to capture both \vero{AD and AI}. The basic idea is 
to mix max-stable and asymptotic independent processes. \vero{Let} $a \in [0,1]$, then the  max-mixture (\vero{MM}) model \vero{is defined} as 
\begin{equation*}
	Z(s) =\max\{\ a X(s),(1-\ a)Y(s)\},\qquad s\in \mathcal{S}
\end{equation*}

where $X(s)$ \vero{is}  a stationary  max-stable process  and $Y(s)$ \vero{is} a stationary \vero{AI process, so that the parameter $a$ represents the proportion of AD in the process $Z$}.\\ 

% \begin{equation}
% \mathbb{P} [Z(s)>z | Z(s+h)>z]\sim a (2 - \theta(h))+ (1-a)^{1/\eta(h)} \frac{\mathcal{L}_{h}(z/(1-a))}{z^{1/\eta(h)-1}} \qquad\text{as}\quad  z \rightarrow \infty
% \label{dependence structure}
% \end{equation}
% \\

% According to Equation (\ref{dependence structure}), max-stable models with $a=1$ may be too restrictive in the sense that they have only the first-order term. While asymptotic independence models with $a=0$ may be unreliable since they are left with the second term only. Hence, max-mixture models seem to provide a good balance between the two classes. Moreover, when $a=0$ or $\theta(h)=2$ the first term on the right-hand side which corrsponds to max-stable part of $Z(s)$ vanishes. Hence, in the case where $a\neq0$ and $\theta(h)=2$ asymptotic dependence is present at short distances while asymptotic independence dominates at larger ones.\\
% 
% 
% 
% A special class of asymptotic independence processes is the inverted max-stable class, which proposed by \cite{doi:10.1093/biomet/asr080} and obtained by inverting a max-stable process. Let $Y(s)$ 
% be a max-stable process, then the process $Y'(s) = -1/ log[1-\exp\{-1/Y(s)\}]$ is an asymptptic independent process with Fr\'{e}chet margins and tail dependence coefficient $\eta(h)=1/\theta(h)$, 
% where $\theta(h)$ is the extremal coefficient of the process $Y(s)$. With this construction, each max-stable process $Y(s)$ can be transformed to provide an asymptotically independent counterpart 
% $Y'(s)$.\\

In this paper, we are \vero{concerned} with constructing a model-free criterion \vero{to choose a realistic value for the mixing parameter $a$.} %that enables for judging the behaviour of the 
%underlying process if it is a mixture of asymptotically dependent and asymptotically independent processes or not. 
Our objective is not to model extremal dependence of joint tails but to set up a 
statistical criterion that facilitate the modelling of the spatial data with suitable behaviour. \vero{We shall use least squares on the $F^\lambda$-madogram. In \citep{bel2008assessing}, a 
	madogram based test on the AD of a max-stable process is proposed, while in \citep{abd}, a parametric test on $a$ for max-mixture processes is developed.}
\\

The paper is organized as follows. Section \ref{sec:models} reviews spatial extremes \vero{processes}. The proposed $F^{\lambda}-$madogram for max-mixture \vero{models} and \vero{the selection 
	criterion for the} mixing coefficient $a$  are developed in Section \ref{sec:mado}, while Section \ref{sec:simu} illustrates the performance of our method through a number of simulation studies. We 
conclude with an illustration of spatial analysis of precipitation in Australia in Section \ref{real_data}.

%%%%%%%%%%%%%%%%%%%%%%%%%%%%%%%%%%%%%%%%%%%%%%%%%%%%

\section{Spatial extremes processes: models}\label{sec:models}
\vero{Throughout our work, $X=\{X(s)\/, s\in \mathcal{S}\}$, $\mathcal{S} \subset \R^d$ (generally, $d=2$) is a spatial process, it will be assumed to be stationary and isotropic.}
\subsection{Max-stable processes}
Suppose that $\{Y_{i}(s): s \in \mathcal{S} \subset \mathbb{R}^{d} \}$, $i=0,1,2,...,$ are i.i.d replicates of a random process $Y(s)$, and that there are sequences of continuous functions $\{a_{n}(s)>0\}$ and $\{b_{n}(s)\}$ such that, the rescaled process of maxima,\\
\begin{equation}
	\bigvee_{i =1 }^{n} \frac{Y_{i}(s) - b_n(s)}{a_n(s)} \buildrel d \over \rightarrow X(s), \quad n\rightarrow \infty
\end{equation}
\\

where the limiting random process $X$ is assumed to be non-degenerate. By \citep{de2006spatial} the class of the limiting processes $X(s)$ coincides with the class of max-stable processes. This definition of MS processes offers a natural choice for modeling spatial extremes.
\\

\vero{The} univariate extreme value theory, \vero{implies that} the marginal distributions of $X(s)$ are Generalized Extreme value (GEV) distributed, and without  loss of generality the margins can 
transformed to a simple MS process called standard Fr\'{e}chet distribution, $\mathbb {P}(X(s)\leq z)= \exp \{-z^{-1}\}$.
\\

Following \citep{de1984spectral, schlather2002models}, a \vero{simple} MS process $X(s)$ \vero{has} the following representation

\begin{equation}
	\label{representation}
	X(s)= \max_{k\geq 1} Q_{k}(s)/P_{k}, \qquad s \in \mathcal{S}.
\end{equation}

where $Q_{k}(s)$  are independent replicates of a non-negative stochastic process $Q(s)$ with unit mean at each $s$, and $P_{k}$ are the points of a unit rate Poisson process $(0,\infty)$.
\\

For $K \in \mathbb{N}\setminus\{0\}$, $s_{1},\ldots,s_{K} \in \mathcal{S}$, and $x_{1},\ldots,x_{K}>0$, the finite $K$-dimensional distribution of the process $X$ owing to the representation( \ref{representation}) is given by
\begin{equation}
	- \log \mathbb{P}(X(s_1)\leq z_{1},\ldots,X(s_K)\leq z_{K})= \mathbb{E}  \left[ \bigvee_{k=1}^{K} \left\{\frac{Q (s_k)}{z_k} \right\} \right]=V_{s_1\/,\ldots\/,s_K}(z_{1},\ldots,z_{K})
\end{equation}
\\

where $V_{s_1\/,\ldots\/,s_K}(.)$ is called the exponent measure. It summarises the structure of extremal dependence, and satisfies the property of homogenity of order $-1$ and 
$V_{s_1\/,\ldots\/,s_K}(\infty,...,z,...,\infty)=z^{-1}$.
\vero{It has to be noted that
	$$-z \log \mathbb {P}\{X(s_1)\leq z,..,X(s_K)\leq z\}=  V_{s_1\/,\ldots\/,s_K}(1,...,1)={\theta_{s_1\/,\ldots\/,s_K}}\/,$$ 
	The coefficient $\theta_{s_1\/,\ldots\/,s_K}$ } is known as the extremal coefficient. \vero{It} can be seen as a summary of 
extremal dependence with two boundary values. The complete independence is achieved when $\theta_{s_1\/,\ldots\/,s_K}=1$, while complete independence is achieved when $\theta_{s_1\/,\ldots\/,s_K}=K$. In the bivariate case, the AI 
and AD between a pair of random variables $Z_{1}$ and $Z_{2}$, with marginal distributions $F_{1}$ and $F_{2}$, \vero{may be identified by}
\begin{equation}\label{eq:chi}
	\chi = \lim_{u \, \to \, 1^-} P(F_{1}(Z_{1} )> u | F_{2}(Z_{2}) > u ))  \/.
\end{equation}
\vero{The cases}  $\chi=0$ and $\chi> 0$ represent AI and AD, respectively, \citep{joe1993parametric}. \vero{This coefficient is related to the pairwise} extremal 
coefficient $\theta$ through the relation $\chi=2-\theta$.\\
Since both dependence functions $\theta$ and $\chi$ are useless \vero{for AI processes}, \citep{coles1999dependence} proposed a new dependence \vero{coefficient which measures the strength of 
	dependence for AI processes:}
\begin{equation}\label{eq:chi_bar}
	\bar{\chi}=\lim_{u \, \to \, 1^-}\bar{\chi}(u)=\lim_{u \, \to \, 1^-}\frac{2\log P(F(Z(s))>u)}{\log P(F(Z(s))>u, F(Z(s+h))>u)}-1
\end{equation}
AD (respectively AI) is achieved if and if $\bar{\chi}=1$ (resp. $\bar{\chi}<1$).
\\
Another dependence model for bivariate joint tails \vero{was} introduced by \citep{ledford1996statistics} 
\begin{equation}\label{eq:tawn}
	\mathbb{P}(X(s)>x, X(s+h)>x)\sim x^{-1/\eta(h)} \mathcal{L}_h(x), \text{for} \quad x\rightarrow \infty
\end{equation}

where $X$ is a stationary spatial process with unit Fr\'{e}chet margins, $\mathcal{L}_h(.)$ \vero{is} a slowly varying function at $\infty$ and the tail dependence coefficient $\eta(h) \in (0,1]$. 
AI \vero{corresponds} to $\eta(h)<1$.\\

Different choices for the process $Q(s)$ in (\ref{representation}) lead to more or less flexible models for spatial maxima. Commonly used models are the Guassian extreme value process 
\citep{smith1990max}, the extremal 
Gaussian process \citep{schlather2002models}, the Brown-Resnick process \citep{kabluchko2009stationary}, and the extremal$-t$  process \citep{opitz2013extremal}. Below, we list these four 
specific examples of max-stable models.
\\

%%%%%%%%%%%%%%%%%%%%%% Smith model %%%%%%%%%%

The storm profile model \citep{smith1990max}, \vero{is} defined by taking $Q_{k}(s)= f(s-W_{k})$ in Equation (\ref{representation}), where $f$ is \vero{the density function of a Gaussian random 
	vector} with covariance matrix $\Sigma \in \mathbb{R}^{2\times 2}$. \vero{The function $f$} plays a major role as it determines the shape of the storm events. $W_{k}$ is a homogenous Poisson process. 
The bivariate \vero{exponent function} of the Smith model \vero{has} the form

\begin{eqnarray*}
	\lefteqn{-\log \mathbb{P}[X(s) \leq x_{1}, X(s+h ) \leq x_{2} ] }\\
	&&= \frac{1}{x_{1}}\Phi\left( \frac{\beta(h)}{2}+\frac{1}{\beta(h)}\log\left(\frac{x_{2}}{x_{1}}\right)\right )+ 
	\frac{1}{x_{2}}\Phi\left( \frac{\beta(h)}{2}+\frac{1}{\beta(h)}\log\left(\frac{x_{1}}{x_{2}}\right)\right)
\end{eqnarray*}
where $\beta(h)=\sqrt{h^{T}\Sigma^{-1}h}$  and $\Phi$ is the standard normal distribution \vero{function}. In this case the 
extremal coefficient is equal to $\theta(h)=2\Phi\{\beta(h)/2\}$.
\\

%%%%%%%%%%%%%%%%%%%%%% TEG model %%%%%%%%%%%%%%%

A model originally due to \citep{schlather2002models} \vero{is} the Truncated Extremal Gaussian (TEG) model and has been exemplified \vero{in} \citep{davison2012geostatistics}. This process 
\vero{is obtained by taking} $Q_{k}(s)= c \max (0,\varepsilon_{k}(s))\mathbbm{1}_{\mathcal{A}_{k}}{(s-R_{k})}$, \vero{where} $\varepsilon_{k}(s)$ are independent replicates of a 
stationary Gaussian process 
$\varepsilon=\{\varepsilon(s),s \in \mathcal{S}\}$ with zero mean, unit variance and correlation function $\rho(.)$. $\mathbbm{1}_{\mathcal{A}}$ is the indicator function of a compact random set 
$\mathcal{A} \subset \mathcal{S}$, $\mathcal{A}_{k}$ are \vero{independent} replicates of $\mathcal{A}$ and $R_{k}$ are points of \vero{a} Poisson process with a unit rate on $\mathcal{S}$. The 
constant $c$ is chosen 
to satisfy the constraint $\mathbb{E}\{Q_k(s)\}=1$.\\

The bivariate \vero{exponent function} of \vero{a} TEG model in the stationary case has the form
\begin{equation}
	\label{truncated}
	-\log \mathbb{P}[X(s) \leq x_{1}, X(s+h ) \leq x_{2} ] =\left( \frac{1}{x_1}+\frac{1}{x_2}\right) \left[ 1- \frac{\alpha(h)}{2}\left(1- \sqrt{1- \frac{2 (\rho(h)+1)x_{1} x_{2}} {(x_{1}+x_{2})^{2}}}\right)\right] 
\end{equation}

where  \vero{$\alpha(h)= (1- h/2r) \mathbbm{1}_{[0,2r]}$ if $\mathcal{A}$ is} a disk of fixed radius $r$. This yields $\theta(h)= 2 - \alpha({h}) \{1-\sqrt{(1-\rho(h))/2}\}$.
\\

%%%%%%%%%%%%%%%%%%%%%% BR model %%%%%%%%%%%

The max-stable Brown-Resnick (BR) process model proposed by \citep{brown1977extreme, kabluchko2009stationary} is a
stationary max-stable process that can be constructed \vero{with} $Q_{k}(s)= \exp\{\varepsilon_{k}(s)-\gamma(s)\}, s \in \mathcal{S}$, \vero{where} $\varepsilon_{k}(s)$ denotes \vero{a} Gaussian 
process with semivariogram $\gamma(h)$. The bivariate \vero{exponent function of a }  BR \vero{process is:}

\begin{eqnarray*}
	\lefteqn{-\log \mathbb{P}[X(s) \leq x_{1}, X(s+h ) \leq x_{2} ] }\\
	&&= \frac{1}{x_{1}}\Phi\left( 
	\sqrt{\frac{\gamma(h)}{2}}+\frac{1}{\sqrt{2\gamma(h)}}\log\left(\frac{x_{2}}{x_{1}}\right)\right )+ \frac{1}{x_{2}}\Phi\left( 
	\sqrt{\frac{\gamma(h)}{2}}+\frac{1}{\sqrt{2\gamma(h)}}\log\left(\frac{x_{1}}{x_{2}}\right)\right)
\end{eqnarray*}

where \vero{$\gamma$ and $\Phi$ denote respectively} the semivariogram and \vero{the} standard normal distribution function. In particular, when the variogram 
$2\gamma(h)=h^{T}\Sigma^{-1}h$ for some covariance matrix $\Sigma$, \vero{we recover the bivariate distribution function of a Smith model}.The pairwise extremal coefficient for \vero{a} Brown-Resnick 
process \vero{is} $\theta(h)=2\Phi\{\sqrt{\gamma(h)/2}\}$.
\\

%%%%%%%%%%%%%%%%%%% extremal$-t $%%%%%%%%%%%%%%%%%%%

The extremal$-t$ max-stable process proposed \vero{in} \citep{opitz2013extremal,ribatet2013extreme} can be constructed by using $Q_{k}(s)= 
\left\{m_{v}^{-1/v} 
T_k(s)\right\}^{v}$, where $T_k$ is \vero{a} zero mean Gaussian process with correlation function $\rho$, $v\ge 1$, $1/m_{v}=\sqrt{\pi}2^{v/2 -1} \Gamma(\frac{v+1}{2})$, $\Gamma(.)$ is the gamma function. This process has 
the \vero{bivariate exponent function:}

\begin{eqnarray*}
	\lefteqn{-\log \mathbb{P}[X(s) \leq x_{1}, X(s+h ) \leq x_{2} ]  }\\
	&&=\frac{1}{x_{1}}T_{v+1}\left( {\alpha}{\rho(h)}+{\alpha}\left(\frac{x_{2}}{x_{1}}\right)^{1/v}\right )+ 
	\frac{1}{x_{2}}T_{v+1}\left( {\alpha}{\rho(h)}+{\alpha}\left(\frac{x_{1}}{x_{2}}\right)^{1/v}\right)
\end{eqnarray*}

where $T_{v}$ is the \vero{distribution function} of a Student random variable with $v$ degrees of freedom and $\alpha=[{v+1}/ \{1-\rho^{2}(h)\}]^{1/2}$. For \vero{an} extremal$-t$ process the 
degrees of freedom $v$ controls the upper bound of the extremal coefficient: $\theta(h)=2T_{v+1}\left(\sqrt{(v+1) \frac{1-\rho(h)}{1+\rho(h)}}\right)$.
\\
%%%%%%%%%%%%%%%%%%%%%%%%%%%%%%%%%%%%%%%%%%%%%%%%%%%
% 
% 

\vero{In this paper, we shall make intensive use of the so-called $F-$madogram that is based on a classical geostatistical tool, the madogram \citep{matheron1987suffit} 
	. It has been introduced in \citep{cooley2006variograms}. Let $X$ is a stationary max-stable random process. The marginal distribution function is denoted 
	by $F$. The $F$-madogram is defined by:} 

\begin{equation}
	\label{F-madogram}
	\nu_{F}(h)=\frac{1}{2}\mathbb{E}[|F(X(s))-F(X(s+h))|], \qquad 0 \leq \nu_{F}(h) \leq 1/6\/.
\end{equation}
\vero{The bounds of the $F$ madogram} correspond \vero{respectively} to complete dependence and independence. Due to the one-to-one relationship between the extremal dependence function and the 
$F-$madogram, a simple estimator for $\theta(h)$ \vero{can be derived:}

\begin{equation}
	\widehat{\theta}(h)=\frac{0.5+\widehat{\nu}_{F}(h)}{0.5-\widehat{\nu}_{F}(h)}
\end{equation}
where $\widehat{\nu}(h) = \frac{1}{2 N} \sum_{i=1}^{N} |\widehat{F}\{x_{i} (s)\} - \widehat{F}\{x_{i} (s+h)\}|$, $x_{i} (s)$ and $x_{i} (s+h)$ are the $i-$th observations of the random field at 
locations $s$ and $s+h$. \vero{$\widehat{F}$ is the empirical distribution function}, i.e, $\widehat{F}(z)= (N+1)^{-1} \sum_{i=1}^{N} \mathbb{I}_{Z^i(s_j)\leq z}$, where $\mathbb{I}(.)$ is the indicator function.
\\

The so-called $F^{\lambda}-$madogram has been introduced for max-stable models by \citep{bel2008assessing} as a generalization of $F-$madogram (\ref{F-madogram}): \vero{for any $\lambda >0$, let}
\begin{equation}
	\label{lambda-madogram}
	\nu_{F^\lambda}(h)= \frac{1}{2}\mathbb{E} \left[|F^{\lambda}{\{X(s)\}} - F^{\lambda}{\{X(s+h)\}}|\right]\/.
\end{equation}

A nonlinear least squares procedure has been proposed by \citep{bel2008assessing} based on $F^{\lambda}-$madogram  to compute an estimator for the extremal dependence function that may outperforms 
other known estimators. \vero{In that work, it has been found by some trials that good estimations are obtained for $\lambda \in [2\/,3]$.}\\

%%%%%%%%%%%%%%%%%%%%%%%%%%%%%%%%%%%%%%%%%%%%%%%%%%%%

\subsection{Hybrid models of spatial extremal dependence}

Although max-stable models \vero{seem} to be suitable for modeling extremely high threshold exceedances, asymptotic independence models may show a better fit at finite thresholds. Due to difficulty 
or 
impossibility in practice to decide whether a dataset should be modeled using AD or AI, \citep{doi:10.1093/biomet/asr080} have been introduced the hybrid spatial dependence models which are able to capture 
both AD and AI.
\\

\vero{Consider $Y'$ a stationary Gaussian  process. Let} $Y(s) = -1/\log(\varPhi(Y'(s)))$ \vero{then, $Y'$} is \vero{an} AI process with unit Fréchet \vero{marginal distributions. Another} class of 
AI processes \vero{called inverted} max-stable 
processes has been proposed by \citep{doi:10.1093/biomet/asr080}. \vero{They are defined as}

\begin{equation}
	\label{IMS}
	Y(s) = -1/\log\{{1-\exp{[-Y'(s)^{-1}]}}\}
\end{equation}
\vero{where $Y'$ is a simple max-stable process with extremal coefficient $\theta_{Y'}$. We a slight abuse of language, we shall denote $\theta_{Y'}$ by $\theta_Y$.}
With this construction, each max-stable process may be transformed \vero{into an} AI independent counterpart. \vero{This inverted max-stable process (IMS) satisfies (\ref{eq:tawn}) and }
$\eta(h)=1/\theta_{Y}(h)$. The bivariate \vero{distribution function is given by}
\begin{equation*}
	\mathbb{P}(Y(s)\leq y_1, Y(s+h) \leq y_2)= -1 +\exp(-y^{-1}_{1})+\exp(-y^{-1}_{2})+\exp\{-V_{Y}{[\omega(y_{1}),\omega(y_{2})]}\}
\end{equation*}

where $V_{Y}$ is the exponent measure of the bivariate extreme-distribution of $\{{Y'(s),Y'(s+h)}\}$, and $\omega(y)=-1/\log\{{1-\exp{[-y^{-1}]}}\}$. \vero{We a slight abuse of language, we shall say 
	that $V_Y$ is the exponent measure of $Y$. }\\

\vero{We are now in position to define the max-mixture processes that we will be working on.}
Let $X$ be a \vero{simple} max-stable process with bivariate extremal coefficient $\theta_X$, and $Y$ be an inverted max-stable process with coefficient of tail dependence $\eta$. Assume that 
$X$ and $Y$ are independent. Then for a mixture proportion $\ a \in [0,1]$, the spatial max-mixture process proposed by \citep{doi:10.1093/biomet/asr080} \vero{is} defined 
as
\begin{equation}
	\label{max-mixture}
	Z(s) =\max\{\ a X(s),(1-\ a)Y(s)\}.
\end{equation}
Clearly, models that are only AD or AI are submodels of $Z$, obtained \vero{for} $a=1$, $a=0$, respectively. The bivariate joint survivor function of the process $Z$ \vero{satisfies}
\begin{equation}
	\mathbb{P} [Z(s)>z , Z(s+h)>z]=\frac{a (2 - \theta(h))}{z} +\left(\frac{1-a}{z}\right)^{1/\eta(h)} + O(z^{-2})  \qquad\text{as}\quad  z \rightarrow \infty
	\label{survivor function}
\end{equation}
If $h_{0}=\inf{\{h: \theta(h)=2\}}$ is finite, then the process $Z$ is AD up to distance $h_{0}$, and AI for longer distances. The bivariate CDF for a pair of sites $(Z(s),Z(s+h))$ is 
\vero{straightforwardly} obtained by the independence between $X(s)$, $Y(s)$

\begin{equation}
	\label{Biv MM}
	\mathbb{P}(Z(s)\leq z_1, Z(s+h) \leq z_2)=\mathbb{P}\left(X(s)\leq\frac{z_1}{\ a}, X(s+h)\leq \frac{z_2}{\ a}\right) \mathbb{P}\left(Y(s)\leq\frac{z_1}{1-\ a},Y(s+h)\leq\frac{z_2}{1-\ a}\right)
\end{equation}

Thus, in the case where $X(s)$ is a max-stable process and $Y(s)$ is a inverted max-stable process, the distribution function in (\ref{Biv MM}) has the form
\begin{eqnarray}
	\mathbb{P}(Z(s)\leq z_1, Z(s+h) \leq z_2)=\exp\{- a V_{X}(z_1,z_2)\} \times \{-1+ \exp[(a-1)/z_1]\\ + \exp[(a-1)/z_2]+\exp[-V_{Y}[\omega((1-a)/z_1), \omega((1-a)/z_2)]\}\nonumber
	\label{distribution MM}
\end{eqnarray}

where $V_{X}$ and $V_{Y}$ are the bivariate exponent measures for $X$ and $Y$ respectively.
\\

%%%%%%%%%%%%%%%%%%%%%    Simulation plots   %%%%%%%%%%%%%

\vero{Figure} \ref{Eta} displays two simulated images of AI processes over the $[0,10]^2$ square. The corresponding functions $\eta(h)$ are also represented. According to \citep{ledford1997modelling}, 
the case 
$\eta(h)=1/2$ corresponds to the near-independence, the AI process constructed from a Brown-Resnick process (this case corresponds to the isotropic Smith process) allows asymptotic independence but tends 
to near-indpendence for long distances. While an AI process constructed from extremal$-t$ process presents a stronger dependence in the asymptotic independence when $h$ is sufficiently large.

\begin{figure} [H]
	\includegraphics[width=0.99\linewidth, height=5cm]{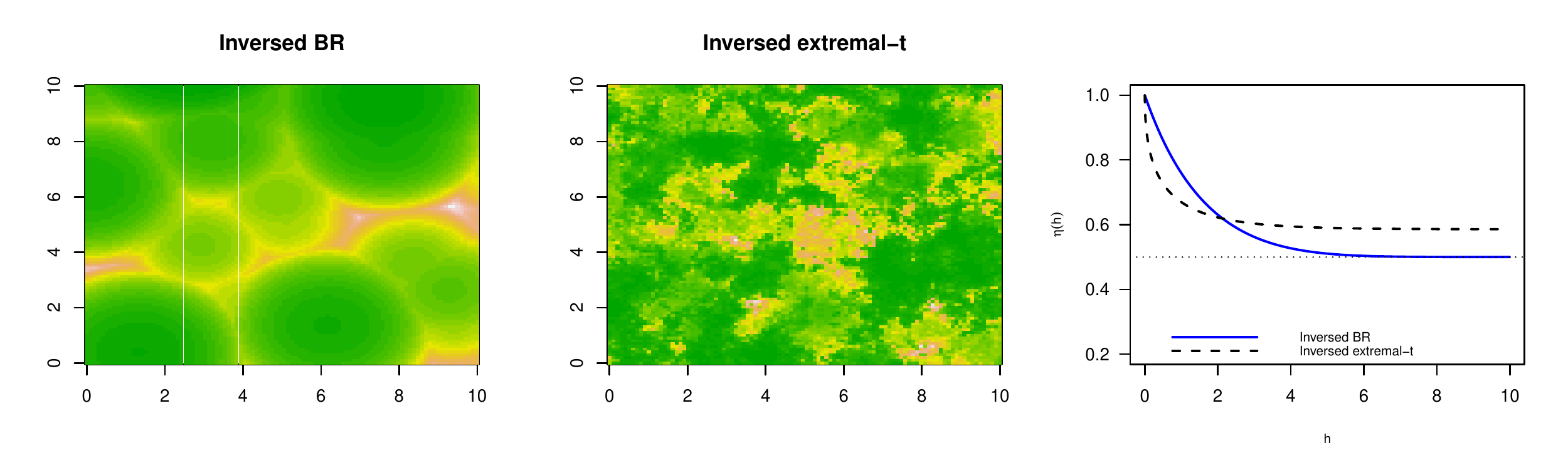}
	
	\caption{Simulations of two inverted max-stable processes (\ref{IMS}) on the logarithm scale. \textbf{Left panel}: simulated image of AI process constructed by inverting an isotropic inverted Brown-Resnick process with variogram $2\gamma(h)= (h / 1.5)^{2}$. \textbf{Middle panel}: simulated image of AI process constructed by inverting an isotropic extremal$-t$ process with $v=1$ degrees of freedom and exponential correlation function $\rho(h)=\exp(- h/1.5)$. On the \textbf{Right panel}: associated functions $\eta(h)$.}
	\label{Eta}	
\end{figure}

%%%%%%%%%%%%%%%%%%%%%%

\vero{Figure} \ref{Simulationplot} displays five simulated images of the max-mixture model over the $[0,10]^2$ square according to different values of the mixing coefficient $a$. In order to show the 
role of the mixing coefficient, the values in the images are acquired by considering the simulation when $a=1$(max-stable process) and $a=0$ (inverted max-stable process). It is noteworthy that the smoothness decreases \vero{as $a$ increases}. \vero{Figure} \ref{SimulationApp} (\nameref{sec:Appendix.A}) displays further examples of simulated 
images of max-mixture models.

\begin{figure} [H]
	\includegraphics[width=0.99\linewidth, height=5cm]{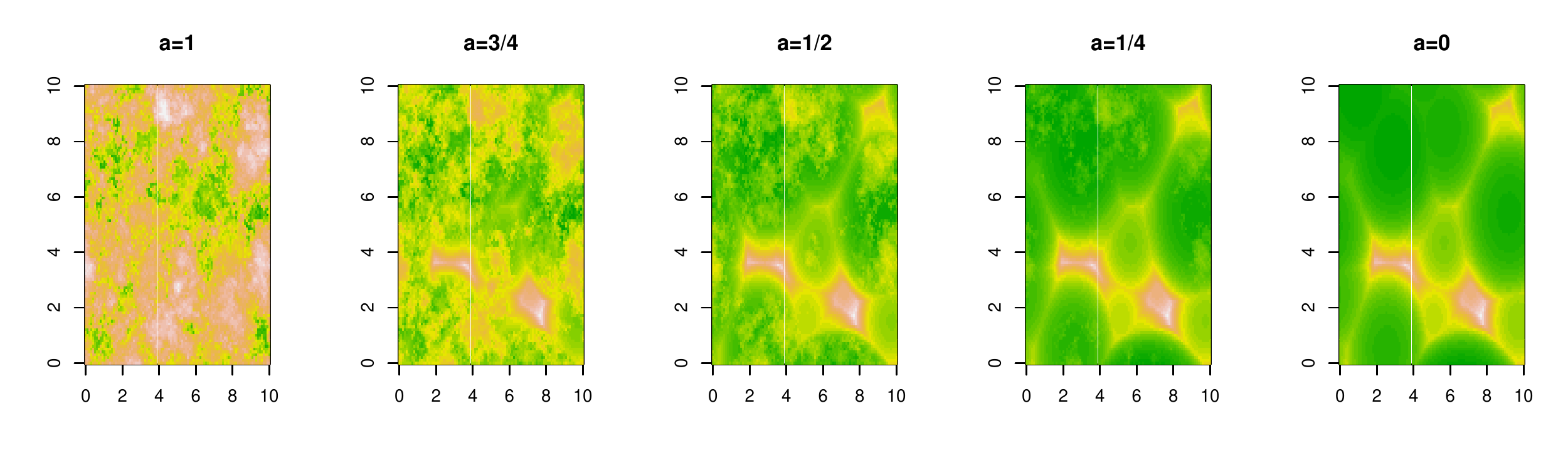}
	
	\caption{Simulations of the max-mixture model (\ref{max-mixture}) on the logarithm scale according different values of mixing coefficient $a \in \{1,0.75,0.5,0.25,0\}$. $X$ is an isotropic extremal$-t$ process with $v=1$ degrees of freedom and exponential correlation function $\rho(h)=\exp(- h)$, and $Y$ is an isotropic inverted Brown-Resnick process with variogram $2\gamma(h)= (h / 1.5)^{2}$. }
	\label{Simulationplot}	
\end{figure}

%%%%%%%%%%%%%%%%%%%%%%%%%%%%%%%%%%%%%%%%%%%%%
\vero{Figure} \ref{Theta} illustrates how the spatial extremal dependencies vary regarding to $a$. Based on (\ref{survivor function}), the measure $\chi_{Z}$ associated to the process $Z$ 
can be computed for a distance $h$ as $\chi_Z(h)= a \chi_X(h)$, \vero{see \citep{bacro2016flexible}.} The hybrid model extends traditional dependence modeling within the AD class and is appropriate 
when AD is present at all distances because it permits to capture a second order in the dependence structure which is not possible with a max-stable model.

\begin{figure} [H]
	\includegraphics[width=0.99\linewidth, height=5cm]{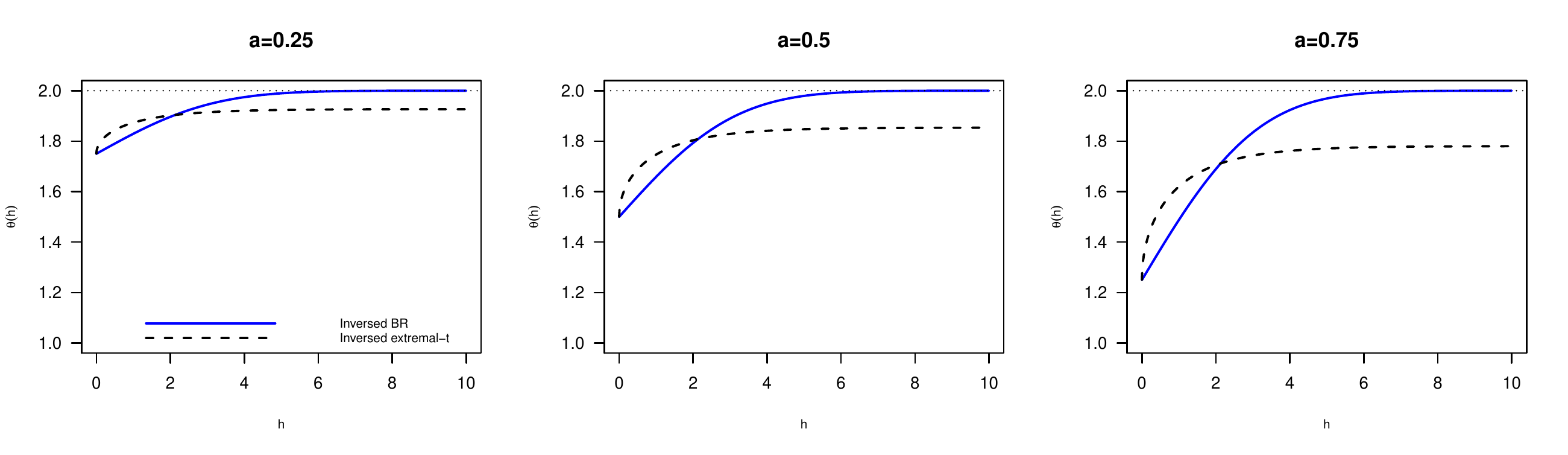}
	
	\caption{Associated $\theta(.)$ functions for a max-mixture process $Z(.)$ based on two max-stable processes $X(.)$ and  $X'(.)$ that belong to the same family with the same parameters used in Figure \ref{Eta} for $a \in \{0.25, 0.5, 0.75\}$.}
	\label{Theta}	
\end{figure}

%%%%%%%%%%%%%%%%%%%%%%%%%%%%%%%%%%%%%%%%%%%%%%%

\vero{In} \citep{bacro2016flexible} daily rainfall data in the East of Australia \vero{are studied. Different models (MS, AI, and MM) are fitted to the data. It is showed} that MM models has 
the merit to overcome the limits of MS 
models in which only AD or exact AI can be modeled.
\\
%%%%%%%%%%%%%%%%%%%%%%%%%%%%%%%%%%%%%%%%%%%%%%

\section{$F^{\lambda}$-madogram for spatial max-mixture model}\label{sec:mado}

In the present paper we \vero{shall use the} $F^{\lambda}$-madogram \vero{defined by Equation (\ref{lambda-madogram})} for max-mixture models (\ref{max-mixture}). \vero{We begin by calculating the 
	expression of the $F^\lambda$-madogram for max-mixture models. Then, we shall develop a $F^\lambda$-madogram procedure to estimate $\theta_X$, $\theta_Y$ and chose $a$. }\\

\begin{prop}\label{prop:f_lambda} Let $\{X(s), s \in \mathcal{S}\}$ be a \vero{simple} max-stable process, with extremal coefficient function $\theta_{X}(h)$, and $Y(s)$ be an inverted max-stable 
	process with coefficient of tail dependence function $\eta(h)= 1/\theta_{Y}(h)$. \vero{Let $a\in[0\/,1]$ and $Z = \max\{aX\/, (1-a)Y\}$. Then, the $F^\lambda$-madogram of the spatial max-mixture process 
		$Z(s)$} is given by
	
	\begin{eqnarray}
		\lefteqn{\nu_{F^\lambda}(h)=\frac{\lambda}{1+\lambda} - \frac{2\lambda}{a (\theta_{X}(h) -1)+1+\lambda}+}\nonumber \\
		&&\frac{\lambda}{ a \theta_{X}(h) +\lambda} -\frac{\lambda \theta_{Y}(h) 
		}{(1-a) \theta_{Y}(h) +a \theta_{X}(h)+\lambda}\beta \left(\frac{a \theta_{X}(h)+\lambda}{ 1-a}, \theta_{Y}(h) \right)
		\label{madogram formula}
	\end{eqnarray}
	where $\beta(.,.)$ is the beta function.
\end{prop}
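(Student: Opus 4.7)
The plan is to reduce $\nu_{F^\lambda}(h)$ to an integral involving the bivariate distribution $F_{Z,Z}(z,z) := \mathbb{P}(Z(s)\leq z,\, Z(s+h)\leq z)$, exploit the product structure of $F_{Z,Z}$ coming from the independence of $X$ and $Y$, and recognise a beta integral after one substitution.

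First, since $X$ and $Y$ are independent with unit Fréchet margins,
$$\mathbb{P}(Z(s)\leq z) = e^{-a/z}\, e^{-(1-a)/z} = e^{-1/z} =: F(z),$$
so $Z$ has unit Fréchet margins and $F(Z(s))$ is uniform on $[0,1]$. Writing $|u-v| = u+v-2\min(u,v)$ and using stationarity, the definition (\ref{lambda-madogram}) gives
$$\nu_{F^\lambda}(h) = \mathbb{E}[F^\lambda(Z(s))] - \mathbb{E}[F^\lambda(M)],\qquad M := \min(Z(s), Z(s+h)),$$
where $\mathbb{E}[F^\lambda(Z(s))] = \int_0^1 u^\lambda\, du = 1/(\lambda+1)$. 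Integration by parts (or the tail formula via $u = F(z)$) yields
$$\mathbb{E}[F^\lambda(M)] = \lambda\int_0^\infty \frac{1}{z^2}\, e^{-\lambda/z}\, \mathbb{P}(M>z)\, dz,\qquad \mathbb{P}(M>z) = 1 - 2e^{-1/z} + F_{Z,Z}(z,z).$$

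Using (\ref{Biv MM}), the homogeneity of $V_X$ (which gives $V_X(z/a,z/a) = a\theta_X(h)/z$), and the bivariate distribution of an inverted max-stable process at equal arguments (which gives the factor $-1 + 2e^{-(1-a)/z} + (1-e^{-(1-a)/z})^{\theta_Y(h)}$ after evaluating $V_Y$ on $\omega(z/(1-a))$ and simplifying), one obtains
$$F_{Z,Z}(z,z) = e^{-a\theta_X(h)/z}\Bigl[-1 + 2e^{-(1-a)/z} + \bigl(1-e^{-(1-a)/z}\bigr)^{\theta_Y(h)}\Bigr].$$
Plugging this expansion into $\mathbb{P}(M>z)$ splits the integral into five pieces. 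Four of them are elementary because $\lambda\int_0^\infty z^{-2} e^{-\alpha/z}\,dz = \lambda/\alpha$, and these produce exactly the rational summands $\lambda/(\lambda+1)$, $\lambda/(\lambda+a\theta_X(h))$, and $2\lambda/(a(\theta_X(h)-1)+1+\lambda)$ (up to sign) appearing in (\ref{madogram formula}).

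The main technical step is the remaining piece
$$I := \lambda\int_0^\infty \frac{1}{z^2}\, e^{-(\lambda+a\theta_X(h))/z}\bigl(1-e^{-(1-a)/z}\bigr)^{\theta_Y(h)}\,dz.$$
The substitution $w = e^{-(1-a)/z}$, under which $(1/z^2)\,dz = dw/((1-a)w)$ and $e^{-(\lambda+a\theta_X(h))/z} = w^{(\lambda+a\theta_X(h))/(1-a)}$, converts $I$ into the beta integral
$$I = \frac{\lambda}{1-a}\int_0^1 w^{(\lambda+a\theta_X(h))/(1-a)-1}(1-w)^{\theta_Y(h)}\,dw = \frac{\lambda}{1-a}\, \beta\!\left(\frac{\lambda+a\theta_X(h)}{1-a},\, \theta_Y(h)+1\right).$$
Finally, the identity $\beta(p,q+1) = \frac{q}{p+q}\beta(p,q)$ with $p = (\lambda+a\theta_X(h))/(1-a)$ and $q = \theta_Y(h)$ replaces the second index by $\theta_Y(h)$ and produces the prefactor $\lambda\theta_Y(h)/((1-a)\theta_Y(h)+a\theta_X(h)+\lambda)$ appearing in (\ref{madogram formula}). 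Collecting all contributions and simplifying $1/(\lambda+1) - 1 + 2\lambda/(\lambda+1) = \lambda/(\lambda+1)$ yields the announced identity.

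The main obstacle is the treatment of $I$: one must find the substitution reducing it to a beta integral and then invoke the beta recurrence to match the precise index $\theta_Y(h)$ in the statement; the rest is a careful bookkeeping of signs and coefficients from the five elementary pieces.
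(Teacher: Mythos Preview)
Your proof is correct and follows essentially the same route as the paper: reduce $\nu_{F^\lambda}(h)$ to the bivariate CDF $F_{Z,Z}(z,z)$, split into elementary exponential pieces plus one beta integral, and apply the recurrence $\beta(p,q+1)=\frac{q}{p+q}\beta(p,q)$. The only cosmetic difference is that the paper uses the identity $|x-y|=2\max(x,y)-(x+y)$ and integrates the CDF of $W=\max\{F^\lambda(Z(s)),F^\lambda(Z(s+h))\}$ on $[0,1]$, whereas you use $|u-v|=u+v-2\min(u,v)$ and the tail formula for $M=\min(Z(s),Z(s+h))$ on $(0,\infty)$; the two are related by the change of variable $w=e^{-(1-a)/z}\leftrightarrow z^{(1-a)/\lambda}$ and lead to the same beta integral.
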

\raggedbottom
\vero{As a consequence of Proposition \ref{prop:f_lambda}, we easily recover the expressions of $F^\lambda$-madograms for max-stable and inverted max-stable processes. The $F^\lambda$-madogram of a 
	simple max stable process $X$ with extremal dependence coefficient $\theta_X$ is:}
\begin{equation}\label{eq:f_lambda_MS}
	\nu_{F^\lambda}(h)= \frac{\lambda}{\lambda+1}\frac{\theta_{X}(h)-1}{\lambda+\theta_{X}(h)}
\end{equation}
\vero{and we have} $\nu_{F^\lambda}(h) \in [0, \frac{\lambda}{(1+\lambda)(2+\lambda)}]$. \vero{The $F^\lambda$-madogram of an inverted max-stable process $Y$ with extremal dependence coefficient 
	$\theta_Y$  is:}
\begin{equation}\label{eq:f_lambda_IMS}
	\nu_{F^\lambda}(h)= \frac{1}{1+\lambda}-\frac{\lambda \theta_{Y}(h)}{\lambda+ \theta_{Y}(h)} \beta(\lambda,\theta_{Y}(h))
\end{equation}
%When $\lambda=1$, these two formulas reduces to $F-$madogram, \cite{cooley2006variograms} and \cite{guillou2014madogram}.\\
\begin{proof}{\em of Proposition \ref{prop:f_lambda}} \vero{We use that for any $x\/,y\in\R$, $|x-y|= 2$ max $ \{x,y\}-(x+y)$ with $x=F^{\lambda}\{Z(s)\}$ and $y=F^{\lambda}\{Z(s+h)\}$. Moreover, 
		recall that} $\mathbb{E}[F^{\alpha}\{Z(s)\}]= 1 / (1+\alpha)$. We have
	
	\begin{align*}
		\nu_{F^\lambda}(h)&= \mathbb{E} \left[\max \{F^{\lambda}{\{Z(s)\}} , F^{\lambda}{\{Z(s+h)\}}\}\right] - \frac{1}{2}\mathbb{E}\left[F^{\lambda}{\{Z(s)\}} + F^{\lambda}{\{Z(s+h)\}}\right]\\
		&= \mathbb{E} \left[\max \{F^{\lambda}{\{Z(s)\}} , F^{\lambda}{\{Z(s+h)\}}\}\right] - \frac{1}{(1+\lambda)}
	\end{align*}
	
	\vero{Consider} the random variable \vero{$W=\max \{F^{\lambda}{\{Z(s)\}} , F^{\lambda}{\{Z(s+h)\}}\}$}. Then probability distribution \vero {function} $G$ of $W$ \vero{satisfies}
	
	\begin{align*}
		G(z)&=\mathbb{P}[W \leq z]\\
		&=\mathbb{P} \left[\max \{F^{\lambda}{\{Z(s)\}} , F^{\lambda}{\{Z(s+h)\}}\} \leq z\right]\\
		&= \mathbb{P} \left[Z(s)\leq F^{-\lambda} (z) , Z(s+h)\leq F^{-\lambda}(z)\right]\\
		&=  \mathbb{P} \left[Z(s)\leq -\frac{\lambda}{\log(z)} , Z(s+h)\leq -\frac{\lambda }{\log(z)} \right]\\ &=\exp\left\{-V_{X}\left(-\frac{\lambda}{a \log(z)},-\frac{\lambda}{a \log(z)}\right)\right\} 
		\cdot \left[ z^{\frac{1-a}{\lambda}} + z^{\frac{1-a}{\lambda}} -1 \right.+ \\
		&\exp \left. \left\{-V_{Y}\left(-\frac{1}{\log\left(1-z^{\frac{1-a}{\lambda}}\right)},-\frac{1}{\log\left(1-z^\frac{1-a}{\lambda}\right)}\right)\right\}\right]
	\end{align*}
	\vero{This rewrites:}
	\begin{align*}
		G(z)&=z^{\frac{a \theta_{X}(h)}{\lambda} }\cdot  \left[ 2 z^{\frac{(1-a)}{\lambda}} -1 + \left(1-z^{\frac{(1-a)}{\lambda}}\right)^{\theta_{Y}(h)}\right]\\
		&=2 z^{\frac{a(\theta_{X}(h) -1)+1}{\lambda}  } - z^{\frac{a}{\lambda} \theta_{X}(h)} +z^{\frac{a}{\lambda} \theta_{X}(h)}   \left(1-z^{\frac{(1-a)}{\lambda}}\right)^{\theta_{Y}(h)}\/.\\
	\end{align*}
	
	\vero{Thus, we are led to}
	\begin{align*}
		\mathbb{E} [W]&= \int_{0}^{1}  z  dG\\
		&= z G \Big|_0^1 - \int_{0}^{1} G dz\\
		&=1 - \left[\frac{2\lambda}{a (\theta_{X}(h) -1)+1+\lambda}-\frac{\lambda}{ a \theta_{X}(h) +\lambda} +I \right]
	\end{align*}
	and 
	\begin{align*}
		I:= \int_{0}^{1} z^{\frac{a}{\lambda} \theta_{X}(h)}\left(1-z^{\frac{(1-a)}{\lambda}}\right)^{\theta_{Y}(h)} dz&=\frac{\lambda}{(1-a)}\beta \left(\frac{a \theta_{X}(h)+\lambda}{1-a},\theta_{Y}(h)+1 \right)\\
		&= \frac{\lambda \theta_{Y}(h) }{(1-a) \theta_{Y}(h) +a \theta_{X}(h)+\lambda}\beta \left(\frac{a \theta_{X}(h)+\lambda}{ 1-a},\theta_{Y}(h) \right)
	\end{align*}
	
	This proves Equation (\ref{madogram formula}).
\end{proof}

%%%%%%%%%%%%%%%%%%%%%%%%%%%%%%%%%%%%%%%%%%%%%%%%%

\vero{Figure \ref{Theortical madogram} displays} the behavior of \vero{the} $F^{\lambda} -$madogram for three different max-mixture models \vero{with respect to} the distance $h$ \vero{for} different 
$\lambda \in \{0.5, 1, 1.5, 3\}$. $\lambda =1.5$ corresponds to the largest values \vero{of the $F^\lambda$ madogram}. Figure \ref{TheorticalApp} (\nameref{sec:Appendix.A}) shows the curves of theortical 
$F^{\lambda} -$madogram for \vero{several other} max-mixture processes.

\begin{figure} [H]
	
	\includegraphics[width=0.99\linewidth, height=6cm]{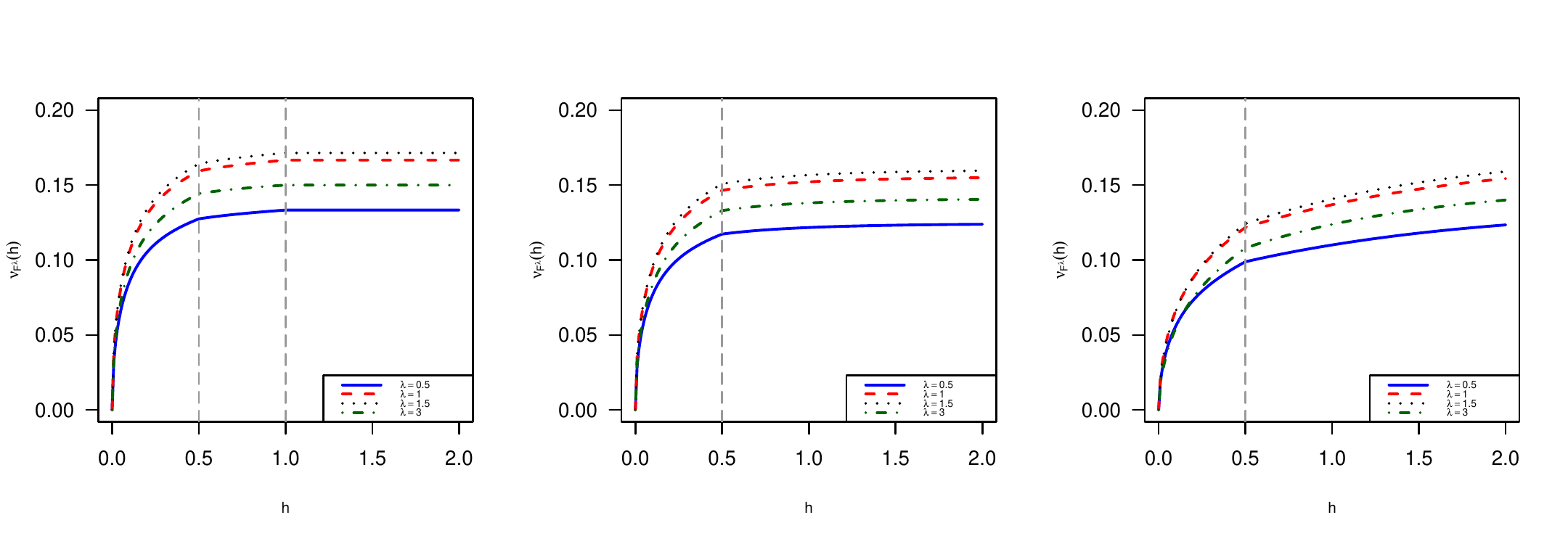}
	
	\caption{Theoretical $F^{\lambda} -$madogram functions \ref{madogram formula} \vero{for} $\lambda=0.5, 1, 1.5, 3$. \textbf{Left panel}: Max-mixture model in which $X$ is a TEG process with $\mathcal{A}_{X}$  a disk 
		of fixed radius $r_{X}=0.25$ and $\rho_{X}(h)=\exp(- h/0.2)$. The asymptotic independent process $Y$ is an inverse TEG process with $\mathcal{A}_{Y}$ a disk of fixed radius $r_{Y}=0.5$ and 
		$\rho_{Y}(h)=\exp(- h/0.4)$. \textbf{Middle panel}: Max-mixture model in which $X$ is a TEG process as \vero{before}. The asymptotic independent process $Y$ is an inverse extremal$-t$ process with  
		\vero{$v=1$ } degrees of freedom and $\rho_{Y}(h)=\exp(- h/0.6)$.  \textbf{Right panel}: Max-mixture model in which $X$ is a TEG process as \vero{before}. \vero{$Y$ is an inverted} Brown-Resnick 
		process with semivariogram $\gamma(h)= h^{2}$. The mixing 
		coefficient is setted to $a=0.5$ in the three models. The grey vertical lines represent the diameters of the disks for TEG processes.}
	
	\label{Theortical madogram}
	
\end{figure}

%%%%%%%%%%%%%%%%%%%%%%%%%%%%%%%%%%%%%%%%%%%%%%
\vero{We are now in position to describe a choice scheme for the mixing parameter $a$ of a max-mixture process $Z$ in (\ref{max-mixture}).  From Equation 
	(\ref{madogram formula}), we may write the $F^\lambda$-madogram as a function of $a$, $\lambda$, $\theta_X$ and $\theta_Y$, that is $ \nu_{F^\lambda}(h) =  
	\Phi(a,\lambda,\theta_{X}(h),\theta_{Y}(h))$. The idea of our choice procedure is that  $\theta_X$ and 
	$\theta_Y$ may be estimated by $\widetilde{\theta}_X$ and $\widetilde{\theta}_Y$, minimizing the square difference between $\Phi(a,\lambda,\theta_{X}(h),\theta_{Y}(h))$ and its empirical counterpart, 
	then we can choose $a$ such that the empirical version of the $F^{\lambda'}$-madogram is the closest to 
	$\Phi(a,\lambda',\widetilde{\theta}_X(h),\widetilde{\theta}_Y(h))$. This idea is close to the non parametric estimation of the parameters of MM processes that has been proposed in \citep{manaf} as an 
	alternative to maximum composite likelihood estimation.\\
	Formally, we consider $Z_{i}\/, i= 1,...,N$  copies of $Z$,
	$$Q_{i} (h,\lambda)= \frac{1}{2}|F^{\lambda}(Z_{i}(s)) -F^{\lambda}(Z_{i}(s+h))|\/,$$
	where $F$ denotes the unit Fréchet distribution function. From the definition of the $F^\lambda$-madogram, we have $\mathbb{E} [Q_{i} (h,\lambda)]=\nu_{F^\lambda}(h)$}. Denote by $\Lambda \subset [0,\infty)$ a finite set of some possible $\lambda$ choices, then for a given value of $a$, a semi-parametric nonlinear least squares minimization procedure for estimating the extremal coefficient $\boldsymbol{\theta}(h) =(\theta_{X} (h),\theta_{Y} (h))^t$ is 
\begin{equation} \label{eq:theta}
	\boldsymbol{{\widetilde{\theta}}^a_{NLS}}(h)=\underset{\theta \in [1\/,2]^2} {\text{arg min}} \quad N^{-1} \sum_{\lambda \in \Lambda}  \sum_{i=1,...,N}\left[{Q_{i}(h\/,\lambda)- 
		\Phi(a\/,\lambda\/,\theta_{X}(h),\theta_{Y}(h))}\right]^{2}	
\end{equation}

\vero{Assume that the $Z_i$'s are observed at locations $s_1\/,\ldots\/, s_K$ and let $h$ be the pairwise distances between the $s_j$'s.} \veron{We shall denote by $\widehat{\nu}_{F^{\lambda}} (h)$ 
	the empirical 
	version of $\nu_{F^\lambda}(h)$, that is for $\Vert s_\ell-s_p\Vert = h$,
	$$\widehat{\nu}_{F^{\lambda}} (h)= \frac{1}{2 N} \sum_{i=1}^{N} |F^{\lambda}\{Z_{i} (s_\ell)\} - F^{\lambda}\{Z_{i} (s_p)\}|\/.$$}
\vero{For $a$ fixed, let $\boldsymbol{{\widetilde{\theta}}^a_{NLS}}(h)=(\widetilde{\theta}_X^a(h)\/,\widetilde{\theta}_Y^a(h))^t$ be estimated as above with some chosen distinct values 
	$\lambda \in \Lambda$. Let $\lambda' \notin \Lambda$ and denote by $\mathcal{H}\subset [0,\infty)$ a finite set of spatial lags $h$. Moreover, let $\widetilde{\nu}_{F^\lambda}$ be the estimation of $\nu_{F^\lambda}$ where $\theta_X$ and $\theta_Y$ are replaced in 
	(\ref{madogram formula}) by $\widetilde{\theta}_X^a$ and $\widetilde{\theta}_Y^a$.  We define:}
\veron{
	\begin{equation}\label{eq:DC}
		\text{DC}(a) = \sum_{h \in \mathcal{H}}\omega(h)\left[\frac{\displaystyle \widehat{\nu}_{F^{\lambda'}}(h)}{\displaystyle \widetilde{\nu}_{F^{\lambda'}}(h)}-1 \right]^2
	\end{equation}
}
\vero{
	where the $\omega{(h)}$'s are nonnegative weights which can be used for example to reduce the number of pairs included in the estimation. A simple choice for these weights is  $\omega{(h)}= 
	\mathbbm{1}_{\{ h \leq r\}}$ where the $r$ value can be chosen as the $q-$quantile of the distributions of 
	the distances $h$ between pairs of sites, $q \in (0,1) $. Finally, we \veron{select} $a$ that gives the lower value of (DC).\\
	\\
	Of course, when dealing with real data, the marginal laws are usually not unit Fréchet and thus have to be changed to unit Fréchet. In that case, the empirical distribution function $\widehat{F}$ is 
	used instead of $F$ in the definitions of $Q_i(h\/,\lambda)$ and $\widehat{\nu}_F^{\lambda}$:\veron{
		\begin{eqnarray*}
			\widehat{Q_i}(h\/,\lambda)& = &\frac12 |\widehat{F}^\lambda(Z_i(s))- \widehat{F}^\lambda(Z_i(s+h))| \/, \ \mbox{and} \\ 
			\widehat{\widehat{\nu}}_{{F}^{\lambda}} (h)&=& \frac{1}{2 N} \sum_{i=1}^{N} |\widehat{F}^{\lambda}\{Z_{i} (s_\ell)\} - \widehat{F}^{\lambda}\{Z_{i} (s_p)\}|\/.
		\end{eqnarray*}
		We shall denote $\widehat{\theta}_X^a$, $\widehat{\theta}_Y^a$, $\widehat{a}$ 
		the estimations of $\theta_X$, $\theta_Y$ and $a$ obtained when $\widehat{Q_i}(h\/,\lambda)$ and $\widehat{\widehat{\nu}}_{F^{\lambda}}$ are used.}} 
\\
In order to get the consistency of our estimations, we need the two following assumptions:
\begin{itemize}
	\item $I_1$: for any $a\in[0\/,1]$,$\{\lambda_{1}\/,\lambda_{2}\} \in \Lambda$ with $\lambda_1\neq \lambda_2$, the mapping
	\begin{eqnarray*}
		[1\/,2]^2 & \longrightarrow & \R^2\\
		(x\/,y) & \mapsto & (\Phi(a\/,\lambda_1\/,x\/,y)\/, \Phi(a\/,\lambda_2\/,x\/,y))
	\end{eqnarray*}
	is injective. 
	\item $I_2$: let $\theta_X$, (resp. $\theta_Y$), $\theta_X'$, (resp. $\theta_Y'$) be extremal coefficients of max-stable (resp. inverse max-stable) processes. Let $\lambda$ be fixed, if for all $h$,
	$\Phi(a\/,\lambda\/,\theta_X(h)\/,\theta_Y(h)) = \Phi(a'\/,\lambda \/,\theta_X'(h)\/,\theta_Y'(h))$ then $a=a'$. 
\end{itemize}
\begin{remark}
	The hypothesis $I_1$ and $I_2$ are identifiability hypothesis. Numerical tests on several models seem to indicate that they are satisfied for various max-mixtures models but we did not succeed to 
	prove it. 
\end{remark}
%%%%%%%%%%%%%%%%%%%%%%%%%%%%
\begin{theorem}\label{a_consist}
	Assume that  $(Z_{i}(s_j))_{i= 1,...,N}$ are i.i.d copies of $Z(s_j)$, $j=1\/, \ldots\/, K$ where $Z$ is a max-mixture spatial process with mixing coefficient $a_0\in[0\/,1]$. Assume that the 
	injectivity conditions $I_1$ and $I_2$ are verified.  Then the 
	estimations of $a$ by $\widetilde{a}$ and $\widehat{a}$ are consistent in the sense that
	$$\widetilde{a} \longrightarrow a_0 \ \mbox{in probability as} \ N\rightarrow\infty\ \mbox{and} \ \widehat{a} \longrightarrow a_0 \ \mbox{in probability as} \ N\rightarrow\infty\/.$$
\end{theorem}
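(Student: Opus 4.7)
The plan is to apply a classical M-estimator consistency argument, cascaded through two levels: the inner nonlinear least squares step that defines $\widetilde{\theta}^a_{NLS}(h)$ in (\ref{eq:theta}), and the outer minimization of $\text{DC}$ in (\ref{eq:DC}). I first establish consistency of $\widetilde{a}$ (margins assumed unit Fréchet) and then propagate to $\widehat{a}$ via Glivenko--Cantelli.

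For each $h\in\mathcal{H}$ and $\lambda\in\Lambda\cup\{\lambda'\}$, the variables $Q_i(h,\lambda)$ are i.i.d., bounded in $[0,1/2]$, with mean $\nu_{F^\lambda}(h)$, so the strong law of large numbers yields $\widehat{\nu}_{F^\lambda}(h)\to\nu_{F^\lambda}(h)$ almost surely, simultaneously across the finite index set. Now fix $h$ and expand the square in (\ref{eq:theta}): the sample criterion $M_N^a(\theta)$ differs from its population counterpart
\[ M^a(\theta) := \sum_{\lambda\in\Lambda}\bigl[\nu_{F^\lambda}(h)-\Phi(a,\lambda,\theta)\bigr]^2 + C \]
by a term bounded by a constant multiple of $\max_{\lambda\in\Lambda}|\widehat{\nu}_{F^\lambda}(h)-\nu_{F^\lambda}(h)|$, uniformly over the compact $[0,1]\times[1,2]^2$. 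By hypothesis $I_1$, $M^a$ admits a unique minimizer $\theta^a(h)\in[1,2]^2$; at $a=a_0$, Proposition \ref{prop:f_lambda} gives $\theta^{a_0}(h)=(\theta_X(h),\theta_Y(h))$ with zero residual. Berge's maximum theorem then ensures continuity of $a\mapsto\theta^a(h)$, and the classical argmin consistency delivers $\sup_{a\in[0,1]}\|\widetilde{\theta}^a_{NLS}(h)-\theta^a(h)\|\xrightarrow{\mathbb{P}}0$.

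By continuity of $\Phi$, $\widetilde{\nu}_{F^{\lambda'}}(h)=\Phi(a,\lambda',\widetilde{\theta}^a_{NLS}(h))\to\Phi(a,\lambda',\theta^a(h))$ in probability, uniformly in $a$. Combined with the first step,
\[ \text{DC}(a) \xrightarrow{\mathbb{P}} \text{DC}^{\infty}(a) := \sum_{h\in\mathcal{H}}\omega(h)\left[\frac{\Phi(a_0,\lambda',\theta_X(h),\theta_Y(h))}{\Phi(a,\lambda',\theta^a(h))}-1\right]^2 \]
uniformly on $[0,1]$. At $a=a_0$ the ratio is identically one, so $\text{DC}^{\infty}(a_0)=0$; conversely, $\text{DC}^{\infty}(a)=0$ forces equality of the two $\Phi$-values at $\lambda'$ for every $h\in\mathcal{H}$ with $\omega(h)>0$, and hypothesis $I_2$ then yields $a=a_0$. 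Hence $a_0$ is the unique minimizer of $\text{DC}^{\infty}$ on the compact $[0,1]$, and the argmin continuity theorem delivers $\widetilde{a}\xrightarrow{\mathbb{P}}a_0$. For $\widehat{a}$, the Glivenko--Cantelli theorem $\sup_z|\widehat{F}(z)-F(z)|\to 0$ a.s., combined with uniform continuity of $z\mapsto z^\lambda$ on $[0,1]$, gives $\widehat{\widehat{\nu}}_{F^\lambda}(h)-\widehat{\nu}_{F^\lambda}(h)\xrightarrow{\mathbb{P}}0$, and the remaining steps run unchanged.

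The delicate technical points are twofold. First, one must verify the uniform-in-$a$ continuity of $a\mapsto\theta^a(h)$, which rests on the smoothness of $\Phi$ in (\ref{madogram formula}) together with $I_1$; Berge's theorem suffices but its hypotheses must be checked carefully, particularly to rule out ties in the argmin at boundary values of $a$. Second, $I_2$ is stated for all $h$, whereas $\text{DC}^{\infty}$ only integrates over $\{h\in\mathcal{H}:\omega(h)>0\}$; the argument therefore implicitly requires this subset to be rich enough for the injectivity $I_2$ to discriminate $a_0$ from any competitor at the single fixed evaluation lag $\lambda'$.
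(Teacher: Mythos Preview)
Your route and the paper's diverge in a substantive way. The paper does \emph{not} attempt uniform-in-$a$ convergence of $\widetilde{\theta}^a_{NLS}$ to a population minimizer $\theta^a$; it establishes consistency of $\widehat{\theta}^{a_0}_{NLS}$ only at the true value $a_0$ (where $I_1$ does yield a unique minimizer, with zero residual), and then argues by compactness: since $DC(\widehat{a}) \leq DC(a_0) \to 0$, every limit point $(a^*, \theta^*)$ of $(\widehat{a}, \widehat{\theta}^{\widehat{a}})$ satisfies $\Phi(a^*, \lambda', \theta^*) = \Phi(a_0, \lambda', \theta_X, \theta_Y)$ by continuity of $\Phi$, whence $I_2$ forces $a^* = a_0$. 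Your M-estimation framework is more systematic, but it demands more than the paper's hypotheses supply.

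The extra demand shows up precisely where you flag it. Hypothesis $I_1$ asserts injectivity of $\theta \mapsto (\Phi(a, \lambda_1, \theta), \Phi(a, \lambda_2, \theta))$; this guarantees a unique least-squares minimizer only when the target vector $(\nu_{F^{\lambda_1}}(h), \nu_{F^{\lambda_2}}(h))$ actually lies in the image of that map, i.e.\ at $a = a_0$. For $a \neq a_0$ an injective continuous map from $[1,2]^2$ into $\R^2$ can easily have a target point equidistant from several image points, so $\theta^a(h)$ need not be single-valued; Berge then gives only upper hemicontinuity of a correspondence, your uniform argmin convergence $\sup_a\|\widetilde{\theta}^a_{NLS}(h)-\theta^a(h)\|\to 0$ is no longer available, and $DC^\infty(a)$ is not well-defined as written. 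The sentence ``By hypothesis $I_1$, $M^a$ admits a unique minimizer $\theta^a(h)$'' is therefore not supported by $I_1$ alone. The paper's subsequence argument sidesteps this entirely because it never needs to identify, or even assert the uniqueness of, a population minimizer at any $a$ other than $a_0$; that is exactly what the limit-point device buys.
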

\begin{proof}
	We shall give the proof for  $\widehat{a}$, the proof for $\widetilde{a}$  is simpler and can be done along the same lines. We first begin by proving the consistency of $\widehat{\theta}_X^{a_0}$ and 
	$\widehat{\theta}_Y^{a_0}$. Consider $\{\lambda_1, \lambda'_1\} \in \Lambda$, with $\lambda_1 \neq \lambda'_1$. Write 
	$$\varepsilon_{h\/,i}^1 = \widehat{Q}_i(h\/,\lambda_1) - \Phi(a_0\/,\lambda_1\/,\theta_X(h)\/,\theta_Y(h)) \  \mbox{and}$$
	$$\varepsilon_{h\/,i}^2 = \widehat{Q}_i(h\/,\lambda'_1) - \Phi(a_0\/,\lambda'_1\/,\theta_X(h)\/,\theta_Y(h))\/.$$
	Using the convergence results from \citep{doi:10.1093/biomet/asp001} (Proposition 3), we have that 
	$$\frac1N \sum_{i=1}^N \varepsilon_{h\/,i}^k \rightarrow 0 \ \mbox{in probability} \/, \ k=1\/, 2 \ \mbox{and}$$
	$$\exists \ \sigma_k \geq 0 \ \mbox{such that} \ \frac1N \sum_{i=1}^N (\varepsilon_{h\/,i}^k )^2 \rightarrow \sigma_k^2 \/, \ \mbox{in probability}  \/, \  k=1\/, 2 \/.$$
	From this remark, following the lines of proof of Theorem II.5.1 in \citep{antoniadis1992regression}, see also the proof of Theorem 4.1 in \citep{manaf}, we conclude, using the injectivity hypothesis 
	$I_1$ that for any $h$, $\widehat{\theta}_X^{a_0} (h) \longrightarrow \theta_X(h)$ in probability and $\widehat{\theta}_Y^{a_0} (h) \longrightarrow \theta_Y(h)$ in probability.\\
	Now, consider $(a^*\/,\theta_X^*(h)\/,\theta_Y^*(h))$, a limit point of $(\widehat{a}\/, \widehat{\theta}_X^{\widehat{a}}(h)\/,\widehat{\theta}_Y^{\widehat{a}}(h))$. Since $\widehat{a}$ reaches the minimum of $DC$, we have $DC(\widehat{a}) \leq DC(a_0)$. The convergence of $\widehat{\theta}_X^{a_0}$, $\widehat{\theta}_Y^{a_0}$ and 
	$\widehat{\widehat{\nu}}_{F^{\lambda'}}$ from \citep{doi:10.1093/biomet/asp001} (Proposition 3) implies that $\displaystyle\lim_{N\rightarrow \infty} DC(a_0) =0$ so that $DC(a^*)=0$ which leads to 
	$\Phi(a^*\/,\lambda'\/, \theta^*_X(h)\/,\theta_Y^*(h)) = \Phi(a_0 \/,\lambda_2\/,\theta_X(h)\/,\theta_Y(h))$ with $w(h)\neq 0$ and thus $a^*=a_0$ by the injectivity 
	condition $I_2$. 
\end{proof}

\begin{remark}
	As seen by our selection scheme above, we have to determine the choices of $\lambda \in \Lambda$ and $\lambda' \notin \Lambda$. After a lot of trials and as a compromise between accuracy and computation time we found that good results can be acheived with $\Lambda =\{1,3\}$ and $\lambda'=1.5$ as shown by the simulation study, Section \ref{sec:simu}.
\end{remark}
%%%%%%%%%%%%%%%%%

\section{Simulation Study}\label{sec:simu}
\veron{This section is devoted to a simulation study, in order to benchmark our estimation procedure. We shall consider two different max-mixture models.\\
	\ \\}

%In this study, we focus on six max-mixture random field models:\\
$\mathbf{M_{1}}$ is a max-mixture model in which $X$ is a TEG process with $\mathcal{A}_{X}$ a disk of fixed radius $r_{X}$. The AI process $Y$ is an inverse TEG process with $\mathcal{A}_{Y}$  a 
disk of fixed radius $r_{Y}$. For simplicity, we choose  stationary \veron{and} isotropic exponential correlation functions, with range parameters $\phi_{X} ,\phi_{Y}>0$ respectively. The model 
\veron{parameters} vector is $\boldsymbol{\psi}=(r_{X},\phi_{X},a,r_{Y},\phi_{Y} )^t$.\\

%$\mathbf{M_{2}}$: is a max-mixture model where $X$ is a TEG process as in $\mathbf{M_{1}}$. $Y$ is an isotropic inverted extremal$-t$ process with $v$ degrees of freedom and exponential correlation 
%function $\rho_{Y}(h)=\exp(- h/\phi_{Y})$, $\phi_{Y} >0$. $\boldsymbol{\psi}=(r_{X}, \phi_{X}, a, v, \phi_{Y} )^t$ is the vector of model parameters.\\ 

%$\mathbf{M_{3}}$: is a max-mixture model where $X$ is a TEG process as in $\mathbf{M_{1}}$ and $\mathbf{M_{2}}$. $Y$ is an isotropic inverted Brown-Resnick process with variogram $2 \gamma(h)= (h / 
%\phi_{Y})^{\tau}$. $\boldsymbol{\psi}=(r_{X},\phi_{X},a,\phi_{Y}, \tau )^t$ is the vector of model parameters.\\

$\mathbf{M_{2}}$ is a max-mixture model where $X$ \veron{is an} isotropic Brown-Resnick process with variogram $2\gamma(h)= (h / \phi_{X})^{\tau}$ and $Y$ is \veron{an} isotropic inverted 
extremal$-t$ \veron{process with $v$ degrees of freedom and exponential correlation function $\rho_{Y}(h)=\exp(- h/\phi_{Y})$, $\phi_{Y} >0$. The model parameters vector is 
	$\boldsymbol{\psi}=(\phi_{X}, \tau, a, v, \phi_{Y} )^t$.}\\

%$\mathbf{M_{5}}$: is a max-mixture model where $X$ is isotropic extremal$-t$  with $v_1$ degrees of freedom and exponential correlation function $\rho_{X}(h)=\exp(- h/\phi_{X})$, $\phi_{X} >0$. $Y$ 
%as in model $\mathbf{M_{2}}$ and $\mathbf{M_{4}}$. $\boldsymbol{\psi}=(v_1, \phi_{X} , a, v_2, \phi_{Y} )^t$ is the vector of model parameters.\\

%$\mathbf{M_{6}}$: is a max-mixture model where $X$ as in model $\mathbf{M_{4}}$, and $Y$ as in model $\mathbf{M_{3}}$. $\boldsymbol{\psi}=(\phi_{X},\tau_1, a,\phi_{Y}, \tau_2 )^t$ denotes the vector 
%of model parameters.\\

%%%%%%%%%%%%%%%%%%%%%%%%%%%%%%%%%%%%%%%%%%%%%%%%%%

We summarize our simulation study procedure in the following steps.\\

\textbf{Step 1.}  For each experiment, we consider a moderately sized data from  the \veron{two} max-mixture models described above with a true mixing coefficient $a_{0}$, 
$K=50$ sites randomly and uniformly distributed in the square $[0,L]^{2}$, $L\in \mathbb{N}$ and $N=2000$ independent \veron{replications at each site}. Max-stable processes were simulated using 
SpatialExtremes package 
in $R$ \citep{ribatet2011spatialextremes} except \veron{for the TEG processes which have been} simulated as in \citep{davison2012geostatistics}. Each experiment was repeated $M=100$ times.\\

\textbf{Step 2.} For each data set in Step 1, we estimated the extremal dependence functions $\theta_{X}(h)$ and $\theta_{Y}(h)$ using the nonlinear least squares estimation criterion \veron{as 
	described in (\ref{eq:theta}), using $\widehat{Q}_i$.} This step was performed with a set of different mixing coefficients including the true one $a_{0}$.\\

\textbf{Step 3.} We calculate our decision criterion (DC($a$)) with the estimated $\widehat{\theta}_{X}^a(h)$ and $\widehat{\theta}_{Y}^a(h)$ functions from Step 2. It is expected 
that this criterion will lead to the minimum values when the true parameter $a$ is used to estimate $\theta_{X}(h)$ and $\theta_{Y}(h)$ in Step 2. Since choosing \veron{a} wrong mixing coefficient 
\veron{$a$ should} lead to \veron{a bad} estimation \veron{of} $\theta_{X}(h)$ and $\theta_{Y}(h)$. Equal weights $\omega(.)$ are \veron{used}. 
\\

To assess the performance of the nonlinear least squares estimator \veron{$\boldsymbol{{\widehat{\theta}}}^a_{NLS}(h)$}, we simulate data from max-mixture models as 
mentioned in Step 1. The performances of the estimators are given by the relative mean square error $\text{MSE}_\text{rel}$, \veron{for the $\frac{K(K-1)}{2}$  pairwise distances $h_j$ which are the 
	distances between sites pairs $(s_\ell\/,s_k)$: see \citep{bel2008assessing} p. 171 for a similar definition in the multivariate context }
\raggedbottom
\begin{equation}
	\text{MSE}_\text{rel}(h_{j}) = M^{-1} \sum_{i=1}^{M} \frac{(\widehat{\theta}^a(h_{j}) -\theta (h_{j}) )^{2}}{\theta (h_{j})}, \quad j=1,...,\frac{K(K-1)}{2}\/.
\end{equation}

Figures \ref{M13} - \ref{M43} display the estimation \veron{performances with respect to distance} $h$ using the mixing coefficients $a=0.75,0.5,0.25$. For other max-mixture models we have similar 
results which are presented in the Supplementary Material (see \nameref{sec:Appendix.B}). Generally, our estimation procedure appears to work well for all \veron{considered}  max-mixture models. Moreover, 
the estimation of ${\theta}_{X}(h)$ becomes more precise as the mixing coefficient value increases, and vice versa for ${\theta}_{Y}(h)$. 
%%%%%%%%%%%%%%%%%%%%%% Model1 %%%%%%%%%%% 
\begin{figure} [H]
	
	\includegraphics[width=0.99\linewidth, height=9cm]{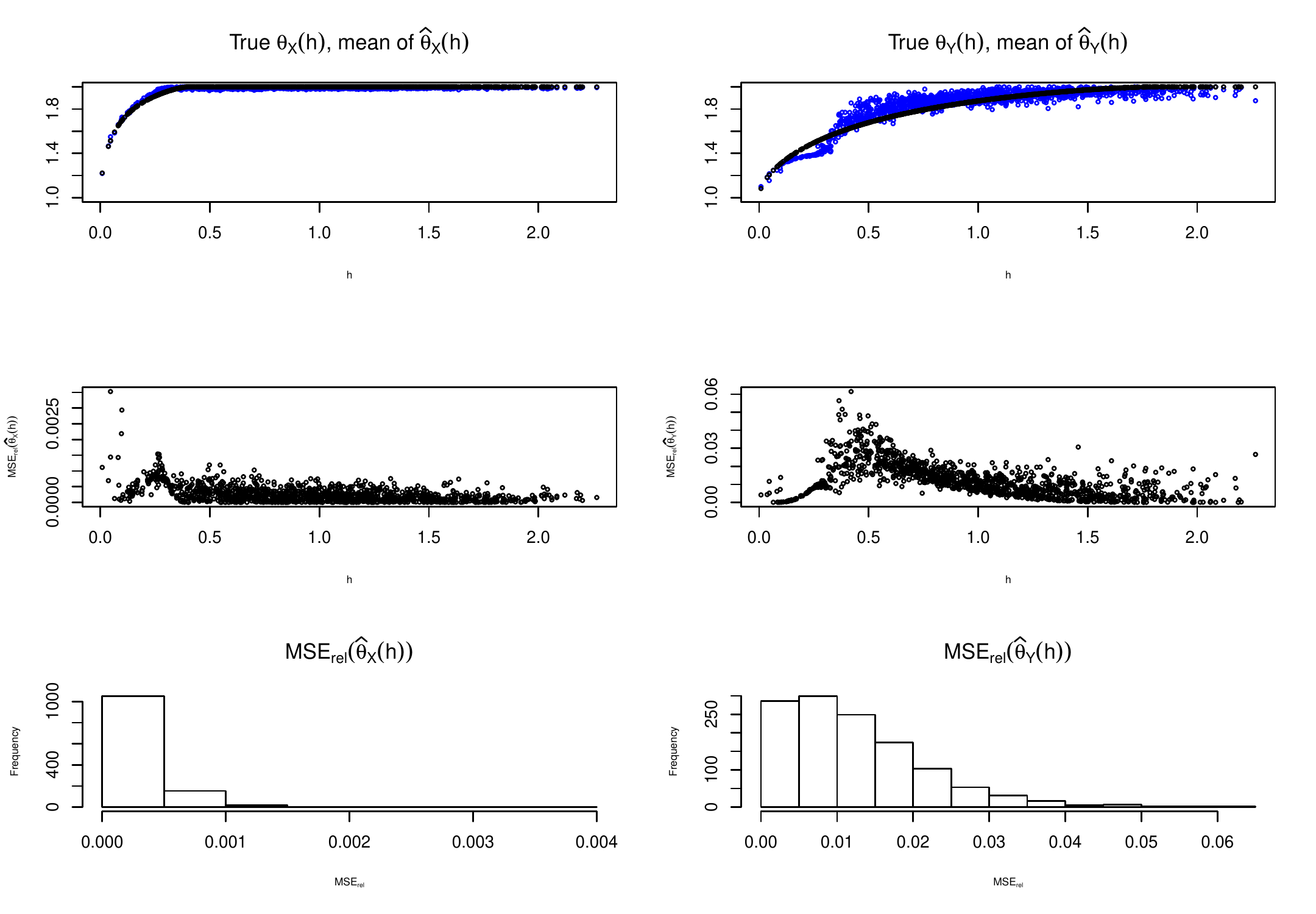}
	
	\caption{Estimation performance of $\boldsymbol{{\hat{\theta}}}_{NLS}(h)$ estimates. Data simulated from max-mixture model $\mathbf{M_{1}}$ with parameter $\boldsymbol{\psi}=(0.2,0.1,\textbf{0.75},0.9,0.7 )^t$ in the square $[0,2]^2$ as described in Step 1. The top row compares the true extermal coefficient functions (black points) and the NLS mean estimates (blue points) against the distance $h$. The middle row displays the $\text{MSE}_\text{rel}$ of the NLS estimates against the distance $h$. While the bottom row displays the histograms of $\text{MSE}_\text{rel}$ of the NLS estimates. } 
	
	\label{M11}
	
\end{figure}

%%%%%%%%%%%%%%%%%%%%%%%%%%%%%%%%%%%%%%%%%%%%%%

\begin{figure} [H]
	
	\includegraphics[width=0.99\linewidth, height=9cm]{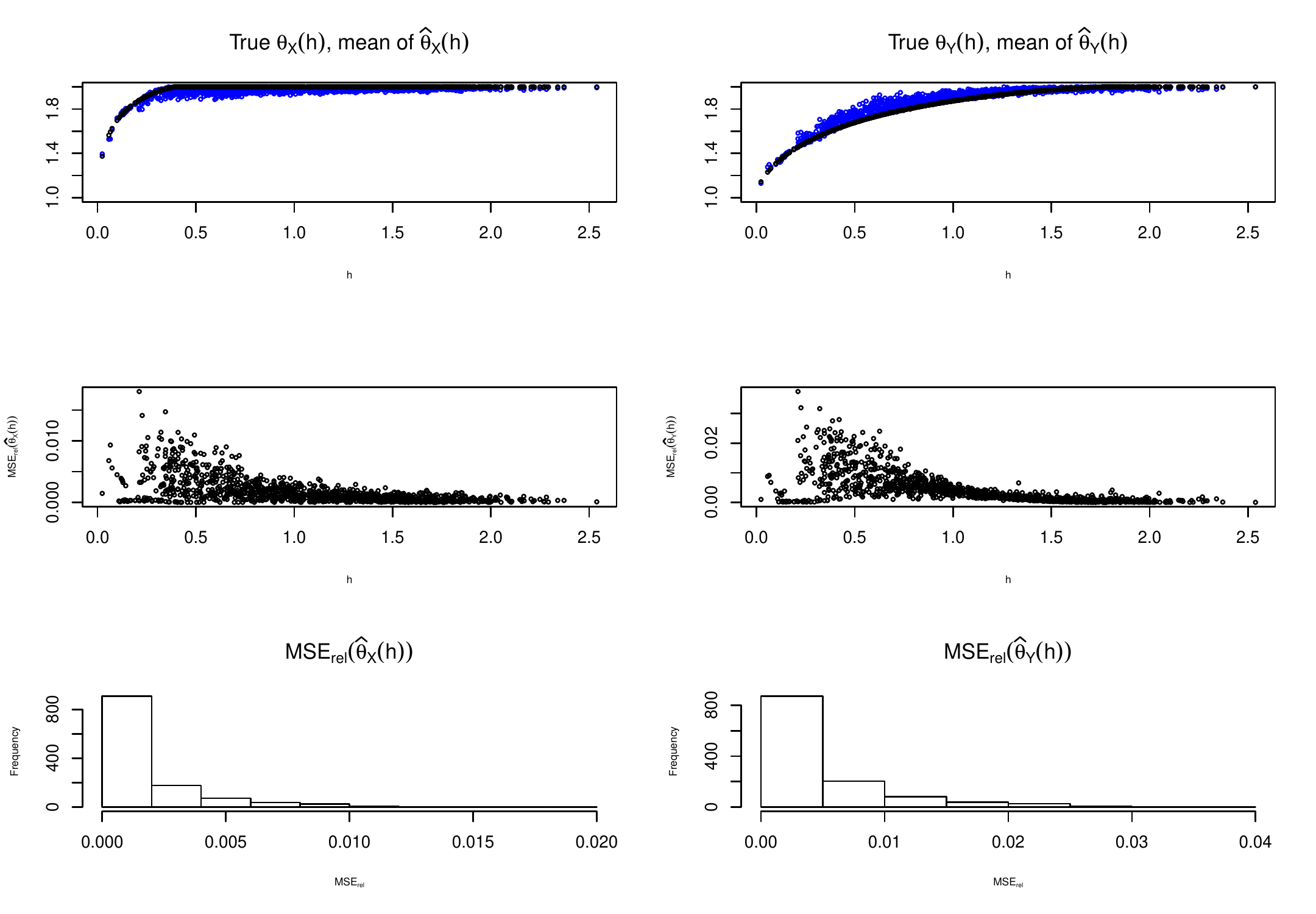}
	
	\caption{Estimation performance of $\boldsymbol{{\hat{\theta}}}_{NLS}(h)$ estimates. Data simulated from max-mixture model $\mathbf{M_{1}}$ with parameter $\boldsymbol{\psi}=(0.2,0.1,\textbf{0.5},0.9,0.7 )^t$ in the square $[0,2]^2$ as described in Step 1. The top row compares the true extermal coefficient functions (black points) and the NLS mean estimates (blue points) against the distance $h$. The middle row displays the $\text{MSE}_\text{rel}$ of the NLS estimates against the distance $h$. While the bottom row displays the histograms of $\text{MSE}_\text{rel}$ of the NLS estimates. } 
	
	\label{M12}
	
\end{figure}

%%%%%%%%%%%%%%%%%%%%%%%%%%%%%%%%%%%%%%%%%%%%%%
\begin{figure} [H]
	
	\includegraphics[width=0.99\linewidth, height=9cm]{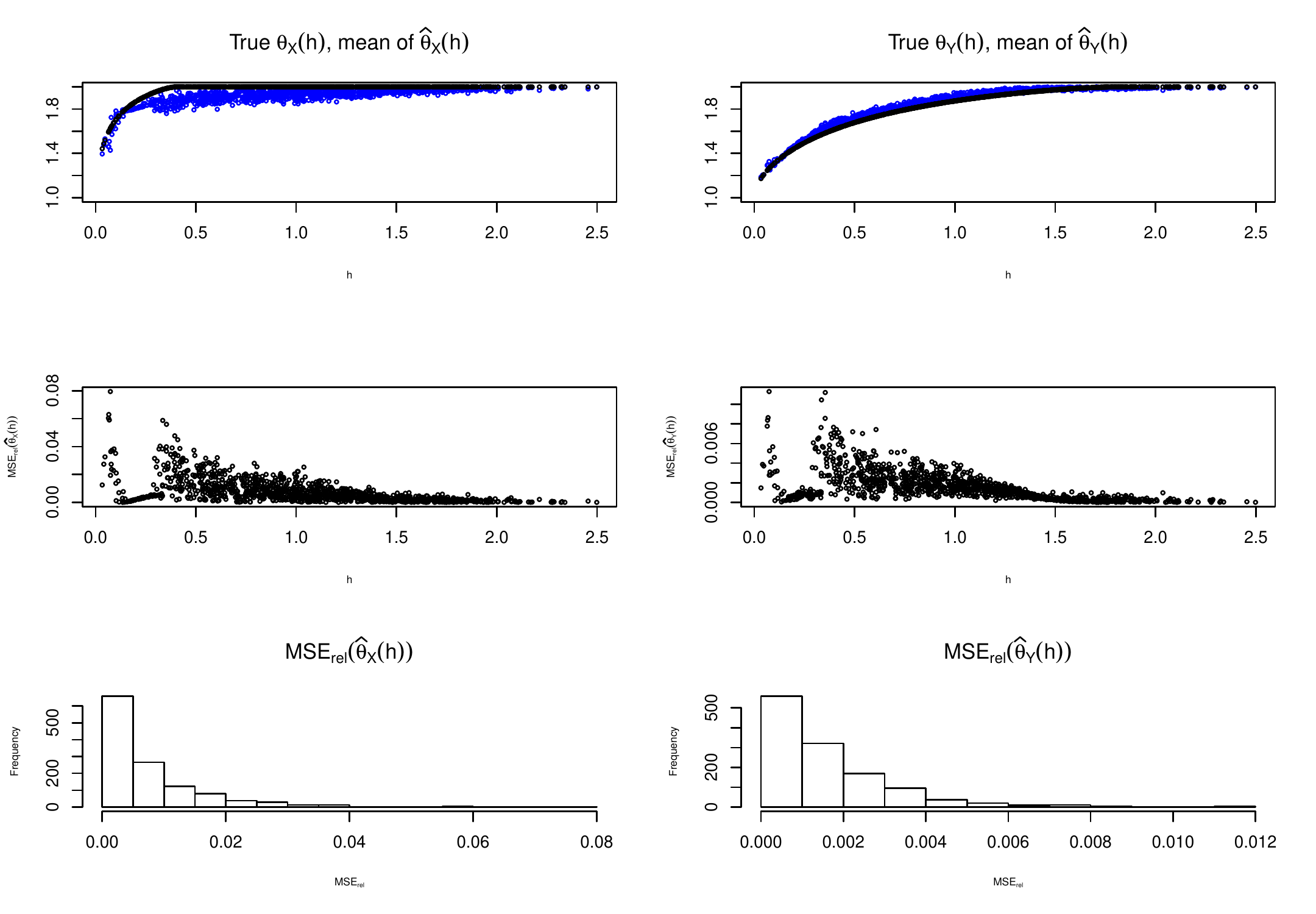}
	
	\caption{Estimation performance of $\boldsymbol{{\hat{\theta}}}_{NLS}(h)$ estimates. Data simulated from max-mixture model $\mathbf{M_{1}}$ with parameter $\boldsymbol{\psi}=(0.2,0.1,\textbf{0.25},0.9,0.7 )^t$ in the square $[0,2]^2$ as described in Step 1. The top row compares the true extermal coefficient functions (black points) and the NLS mean estimates (blue points) against the distance $h$. The middle row displays the $\text{MSE}_\text{rel}$ of the NLS estimates against the distance $h$. While the bottom row displays the histograms of $\text{MSE}_\text{rel}$ of the NLS estimates. } 
	
	\label{M13}
	
\end{figure}
%%%%%%%%%%%%%%%%%%%%%%%%
%Here are another way to present the plots with small space
% 
% \begin{figure}[H]
% 	\centering
% 	\begin{subfigure}{0.33\textwidth}
% 		\centering
% 		\includegraphics[width=0.99\linewidth,height=7cm]{M11.pdf}
% 		\caption{$a = 0.75$}
% 	\end{subfigure}%
% 	\begin{subfigure}{0.33\textwidth}
% 		\centering
% 		\includegraphics[width=0.99\linewidth,height=7cm]{M12.pdf}
% 		\caption{$a = 0.5$}
% 	\end{subfigure}%
% 	\begin{subfigure}{0.33\textwidth}
% 		\centering
% 		\includegraphics[width=0.99\linewidth,height=7cm]{M13.pdf}
% 		\caption{$a = 0.25$}
% 	\end{subfigure}
% 	\caption{Estimation performance of $\boldsymbol{{\hat{\theta}}}_{NLS}(h)$ estimates. Data simulated from max-mixture model $\mathbf{M_{1}}$ with parameters $r_{X}=0.2,\phi_{X}=0.1, r_{Y}=0.9, \phi_{Y}=0.7$ in the square $[0,2]^2$ as described in step 1. The top row compares the true extermal coefficient functions (black points) and the NLS mean estimates (blue points) against the distance $h$. The middle row displays the $\text{MSE}_\text{rel}$ of the NLS estimates against the distance $h$. While the bottom row displays the histograms of $\text{MSE}_\text{rel}$ of the NLS estimates. } 
% 	\label{}
% \end{figure}
% 

%%%%%%%%%%%%%%%%% Model 4 %%%%%%%%%

\begin{figure} [H]
	
	\includegraphics[width=0.99\linewidth, height=9cm]{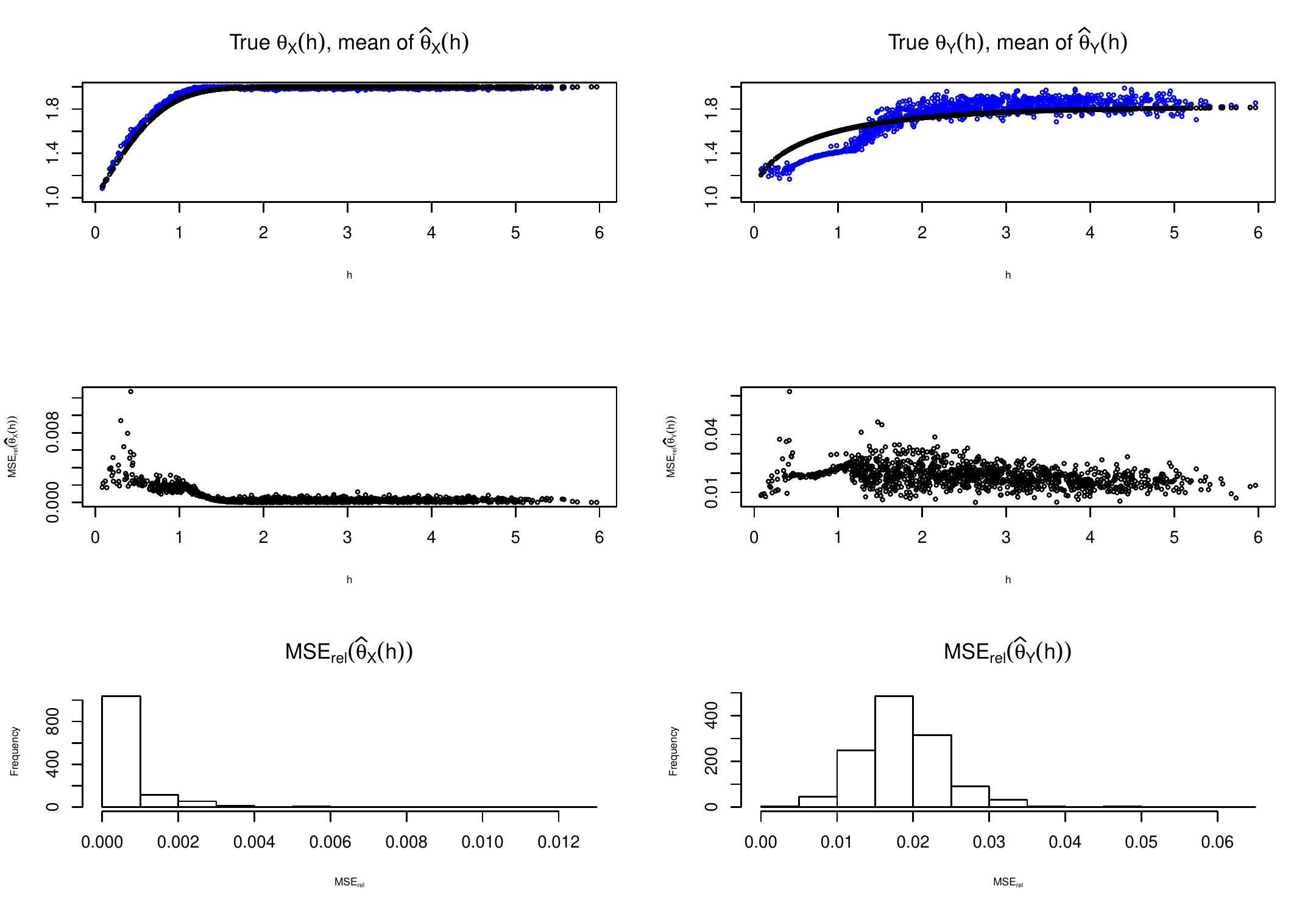}
	
	\caption{Estimation performance of $\boldsymbol{{\hat{\theta}}}_{NLS}(h)$ estimates. Data simulated from max-mixture model \veron{$\mathbf{M_{2}}$} with parameter 
		$\boldsymbol{\psi}=(0.1,2,\textbf{0.75},2,1.5 )^t$ in the square $[0,5]^2$ as described in Step 1. The top row compares the true extermal coefficient functions (black points) and the NLS mean 
		estimates (blue points) against the distance $h$. The middle row displays the $\text{MSE}_\text{rel}$ of the NLS estimates against the distance $h$. While the bottom row displays the histograms of 
		$\text{MSE}_\text{rel}$ of the NLS estimates. } 
	
	\label{M41}
	
\end{figure}

%%%%%%%%%%%%%%%%%%%%%%%%%%%%%%%%%%%%%%%%%%%%%%
\begin{figure} [H]
	
	\includegraphics[width=0.99\linewidth, height=9cm]{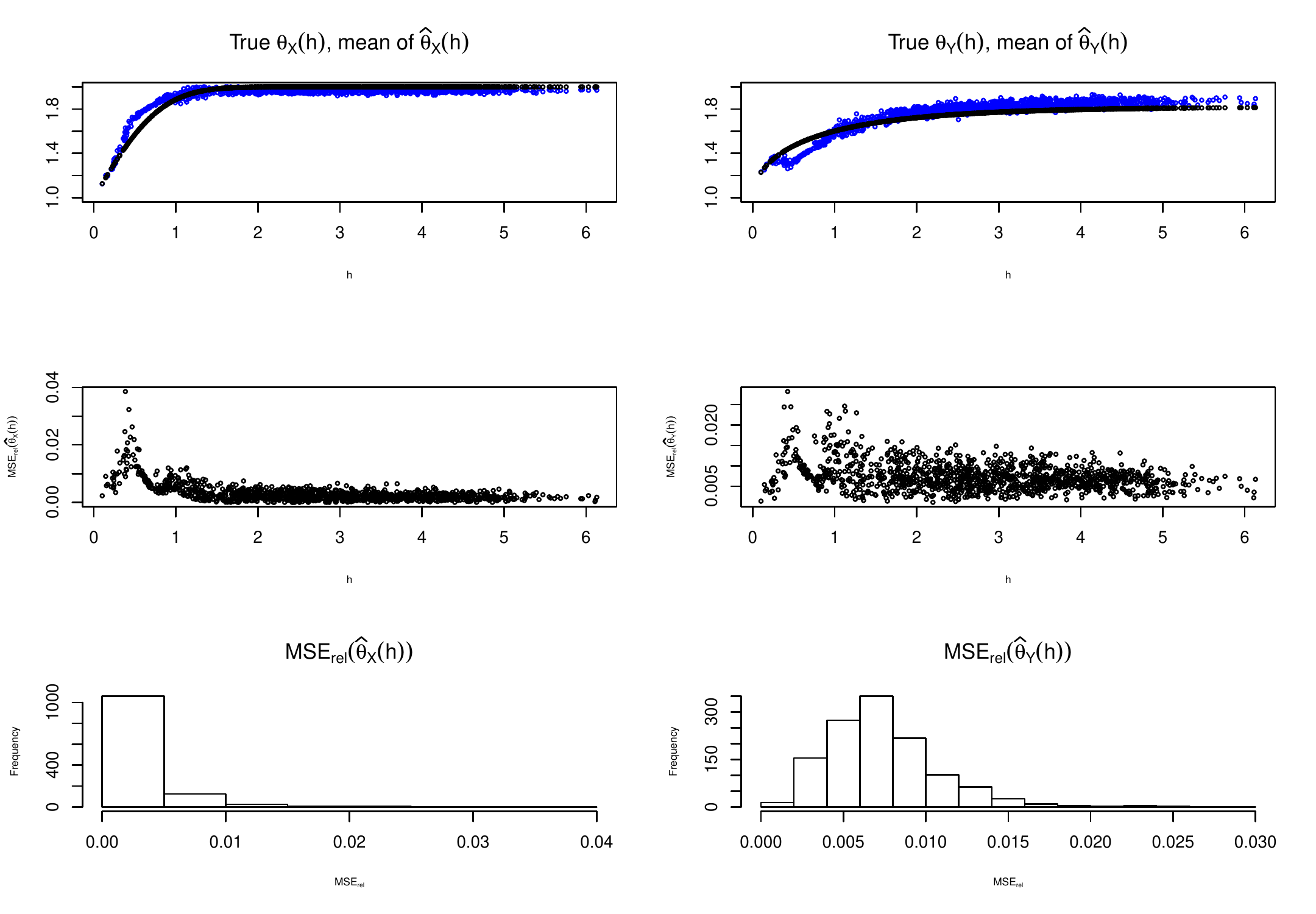}
	
	\caption{Estimation performance of $\boldsymbol{{\hat{\theta}}}_{NLS}(h)$ estimates. Data simulated from max-mixture model \veron{$\mathbf{M_{2}}$} with parameter 
		$\boldsymbol{\psi}=(0.1,2,\textbf{0.5},2,1.5)^t$ in the square $[0,5]^2$ as described in Step 1. The top row compares the true extermal coefficient functions (black points) and the NLS mean estimates 
		(blue points) against the distance $h$. The middle row displays the $\text{MSE}_\text{rel}$ of the NLS estimates against the distance $h$. While the bottom row displays the histograms of 
		$\text{MSE}_\text{rel}$ of the NLS estimates. } 
	
	\label{M42}
	
\end{figure}
%%%%%%%%%%%%%%%%%%%%%%%%%%%%%%%%%%%%%%%%%%%
\begin{figure} [H]
	
	\includegraphics[width=0.99\linewidth, height=9cm]{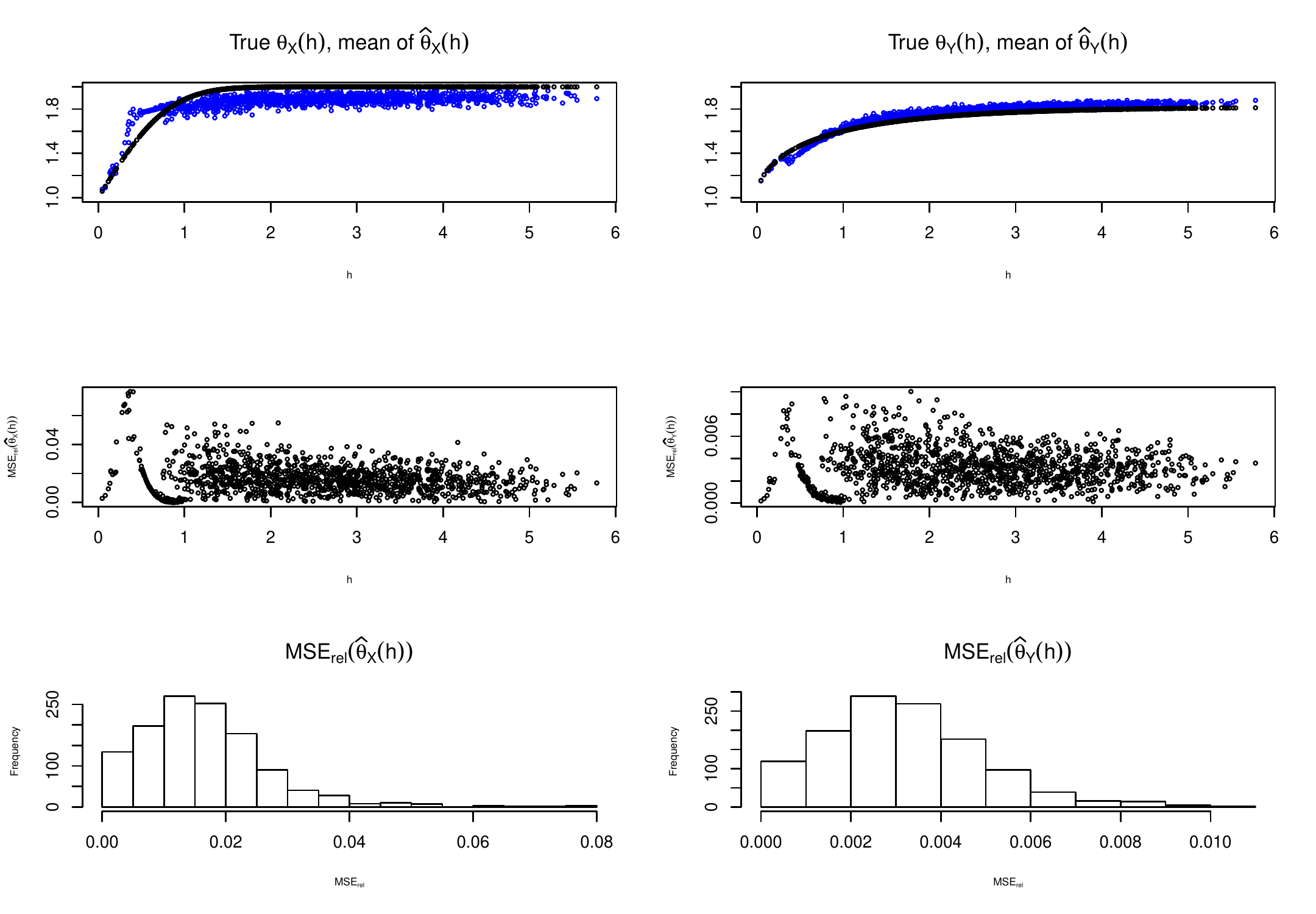}
	
	\caption{Estimation performance of $\boldsymbol{{\hat{\theta}}}_{NLS}(h)$ estimates. Data simulated from max-mixture model \veron{$\mathbf{M_{2}}$} with parameter 
		$\boldsymbol{\psi}=(0.1,2,\textbf{0.25},2,1.5 )^t$ in the square $[0,5]^2$ as described in Step 1. The top row compares the true extermal coefficient functions (black points) and the NLS mean 
		estimates (blue points) against the distance $h$. The middle row displays the $\text{MSE}_\text{rel}$ of the NLS estimates against the distance $h$. While the bottom row displays the histograms of 
		$\text{MSE}_\text{rel}$ of the NLS estimates. } 
	
	\label{M43}
	
\end{figure}
%%%%%%%%%%%
%Here are another way to present the plots with small space
% 
% \begin{figure}[H]
% 	\centering
% 	\begin{subfigure}{0.33\textwidth}
% 		\centering
% 		\includegraphics[width=0.99\linewidth,height=7cm]{M41.pdf}
% 		\caption{$a = 0.75$}
% 	\end{subfigure}%
% 	\begin{subfigure}{0.33\textwidth}
% 		\centering
% 		\includegraphics[width=0.99\linewidth,height=7cm]{M42.pdf}
% 		\caption{$a = 0.5$}
% 	\end{subfigure}%
% 	\begin{subfigure}{0.33\textwidth}
% 		\centering
% 		\includegraphics[width=0.99\linewidth,height=7cm]{M43.pdf}
% 		\caption{$a = 0.25$}
% 	\end{subfigure}
% 	\caption{Estimation performance of $\boldsymbol{{\hat{\theta}}}_{NLS}(h)$ estimates. Data simulated from max-mixture model $\mathbf{M_{4}}$ with parameters $\phi_{X}=0.1, \tau=2, v=2, \phi_{Y}=1.5$ in the square $[0,5]^2$ as described in step 1. The top row compares the true extermal coefficient functions (black points) and the NLS mean estimates (blue points) against the distance $h$. The middle row displays the $\text{MSE}_\text{rel}$ of the NLS estimates against the distance $h$. While the bottom row displays the histograms of $\text{MSE}_\text{rel}$ of the NLS estimates. } 
% 	\label{}
% \end{figure}

%%% End performance %%%%%%

%%%%%%%%%%%%%%%%%% criteria  calculation %%%%%%%%%%%

\veron{Now, we turn to} our proposed model selection criterion (DC) for selecting the max-mixture models with the best mixing coefficient $a$ through a number of simulation studies using the 
\veron{two} mentioned max-mixture models. The boxplots \veron{in} Figure \ref{DC1-4} display the values of the decision criterion (DC) against different mixing coefficients $a \in 
\{0,0.25,0.5,0.75,1\}$ for models $\mathbf{M_{1}}$ and \veron{$\mathbf{M_{2}}$. The lower values of DC are likely} related with the true mixing coefficient $a_{0}$. Boxplots of the decision criterion (DC) for other examples of max-mixture models in which we have similar results available in the Supplementary Material (see \nameref{sec:Appendix.B}).

%%%%%%%%%%%%%%%%%%%%%%%%% model 1+ model 4 %%%%%

\begin{figure} [H]
	
	\includegraphics[width=0.99\linewidth, height=8cm]{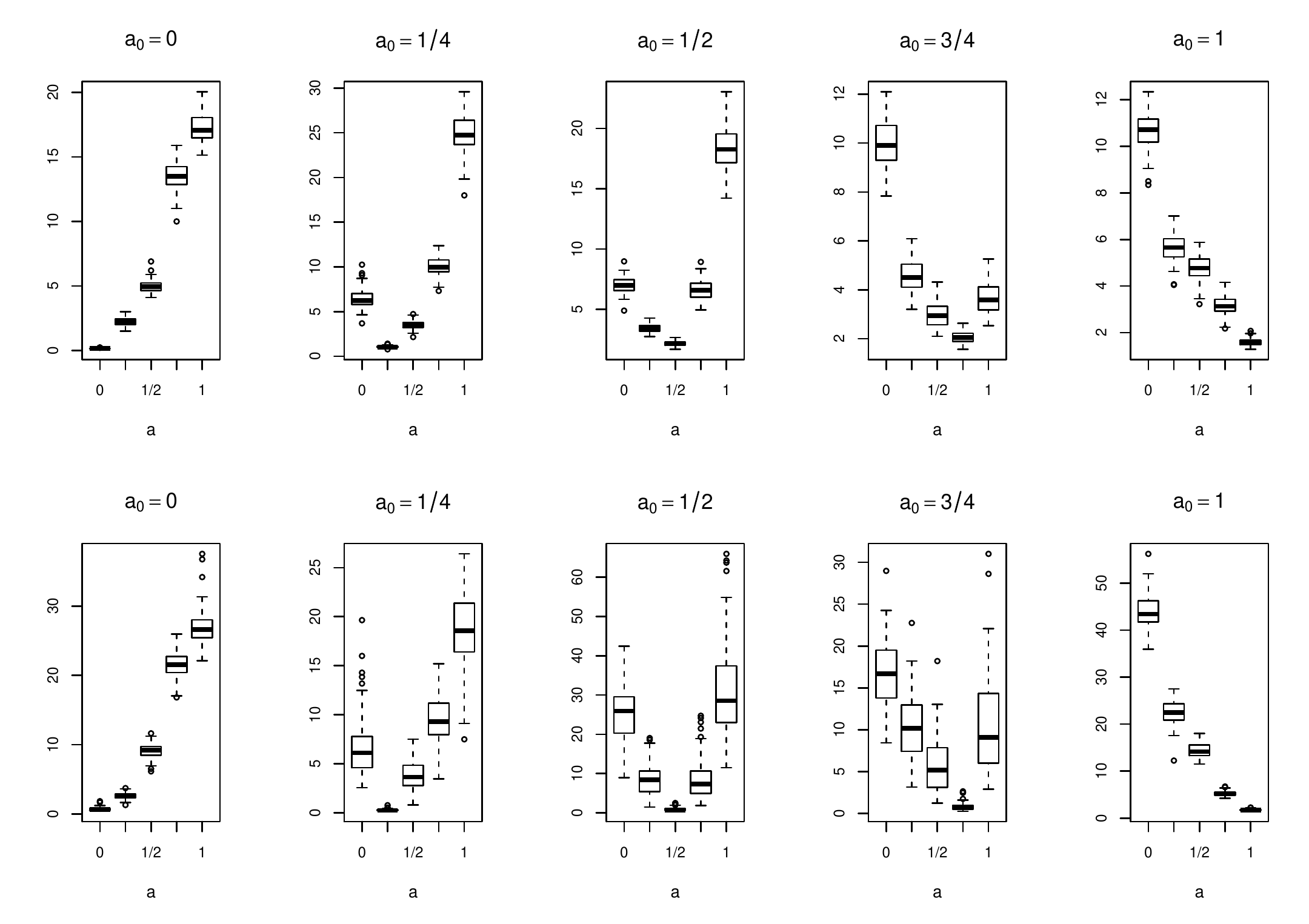}
	
	\caption{Top row: boxplots for decision criterion of 100 data replications of 2000 independent copies from $\mathbf{M_{1}}$ with parameters $\phi_{X}= 0.1$, $r_{X}= 0.2$, $\phi_{Y}= 0.7$ and 
		$r_{Y}= 0.9$ over the square $[0,2]^{2}$. Bottom row: boxplots for decision criterion of 100 data replications of 2000 independent copies from \veron{$\mathbf{M_{2}}$} with parameters  $\phi_{X}= 
		0.1$, $\tau=2$, $v=2$ and $\phi_{Y}= 1.5$ over the square $[0,5]^{2}$.}
	
	\label{DC1-4}
	
\end{figure}

\section{Real data example}\label{real_data}

The data analysed in this section are daily rainfall  amounts in (millimetres) over the years 1972-2016 occurring during April-September at 38 sites in the East of Australia whose locations are shown in Figure \ref{Map}. The altitude of the sites varying from 4 to 552 meters above mean sea level. The sites are separated by distances of approximately (34-1383)km. This data set is freely available on \href{url}{http://www.bom.gov.au/climate/data/index.shtml?bookmark=136}.
\begin{figure} [H]
	
	\includegraphics[scale=0.4]{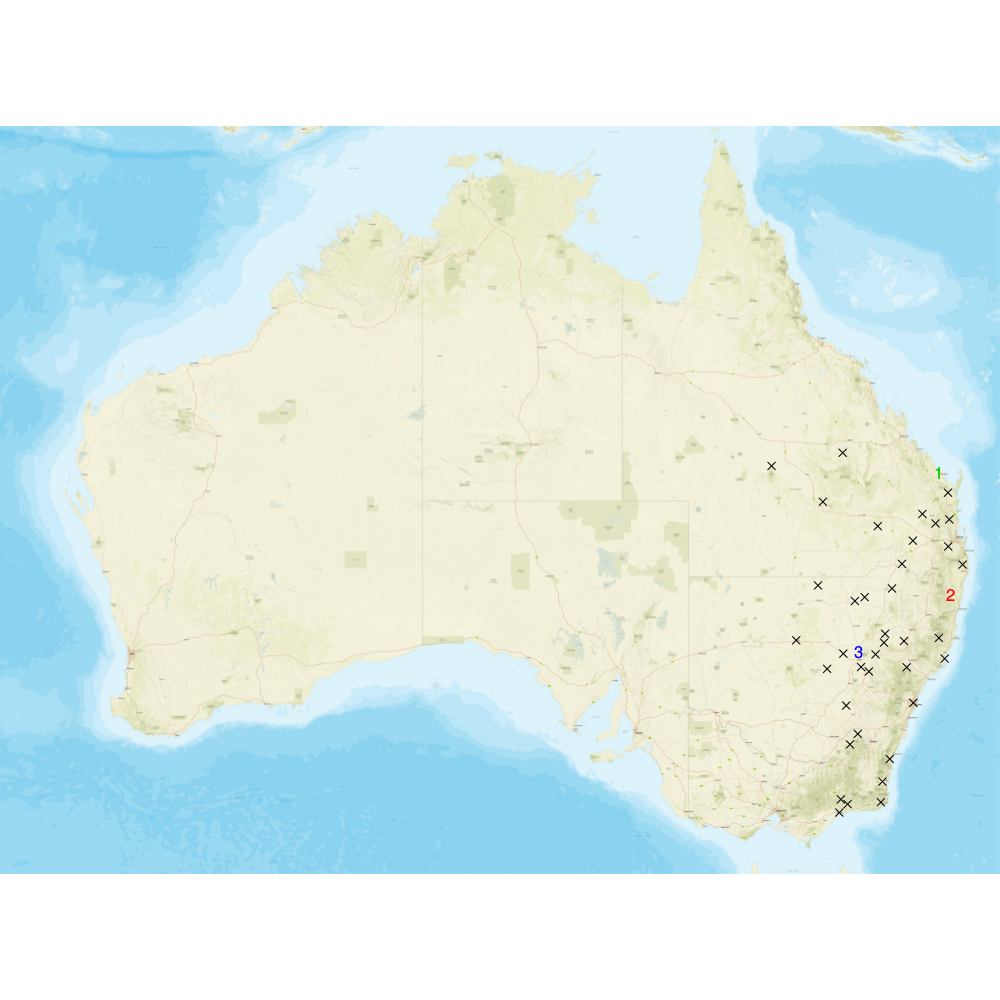}
	
	\caption{Geographical locations of  $41$ meteorological stations located in the East of Australia. Black crosses represent the 38 stations used for selecting the mixing coefficient $a$. The 
		three colored numbers \{1,2,3\} correspond to the \veron{unused} stations \veron{on which we shall predict conditional probabilities in order to validate our procedure}.}
	
	\label{Map}
	
\end{figure}

Following the approach of \citep{bacro2016flexible}  graphical assessments to explore possible anisotropy Figure \ref{Isotropy} based on the empirical estimates of the functions $\chi(h,u)$ and 
$\bar{\chi}{(h,u)}$ in different directional sectors $(- \pi/8,  \pi/8]$,  $( \pi/8,  3\pi/8]$, $( 3\pi/8,  5\pi/8]$, and $( 5\pi/8,  7\pi/8]$, where 0 represents the northing direction. Based on 
these estimates there is no clear evidence of anisotropy.

\begin{figure} [H]
	\includegraphics[width=0.9\linewidth,height=6cm]{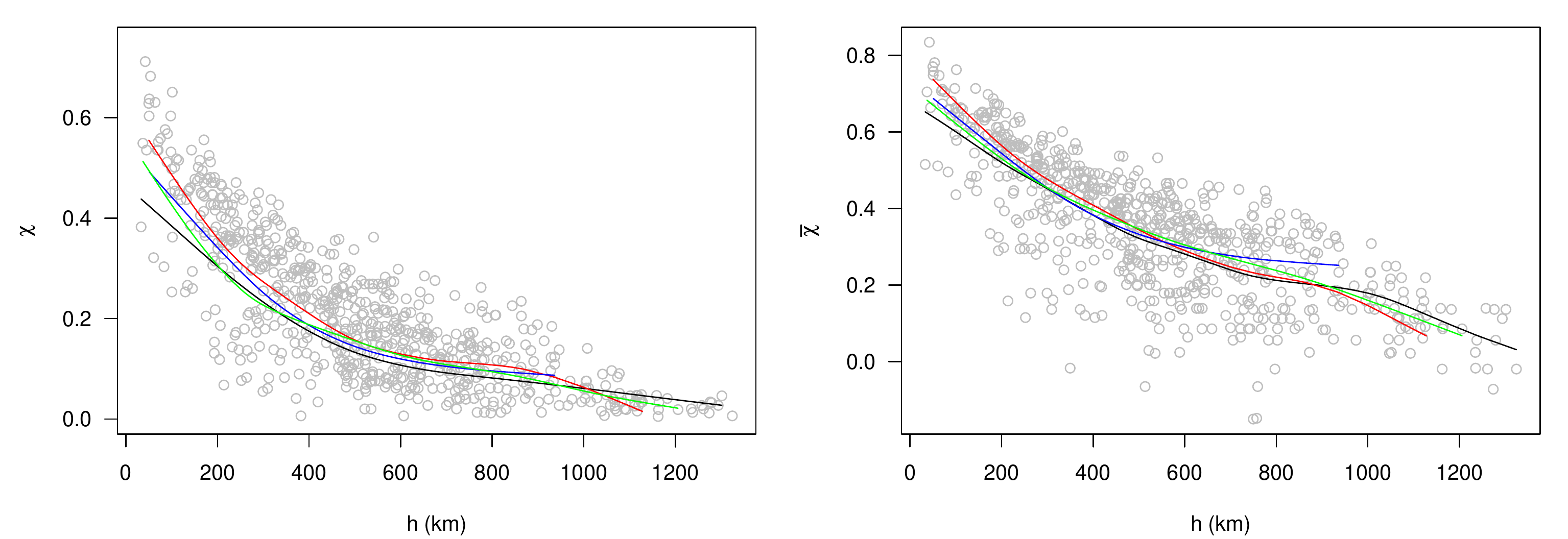}
	
	\caption{Pairwise empirical estimates of $\chi$ (left panel) and $\bar{\chi}$ (right panel) versus distance at threshold $u = 0.970$. Grey points are empirical pairwise estimates for all data pairs. Colored lines are the loess smoothed values of the empirical estimates in different directional sectors: black line $(- \pi/8,  \pi/8]$, red line $( \pi/8,  3\pi/8]$, blue line $( 3\pi/8,  5\pi/8]$, and green line $( 5\pi/8,  7\pi/8]$. }
	\label{Isotropy}	
\end{figure}
We apply our methodology for the selection of the mixing coefficient for all $a \in (0,1)$ by steps 0.01. \veron{The $a\mapsto DC(a)$ function is plotted in Figure }
\ref{result}. The best-fitting max-mixture model as judged by our DC \veron{criterium has} a mixing coefficient $a=0.34$.
\\

%%%%%%%%%%%%%%%%%%%%%%%%%%%%%%%%%%%%%%%%%%%%%%%%%%

\begin{figure} [H]
	
	\includegraphics[width=0.5\linewidth, height=12cm]{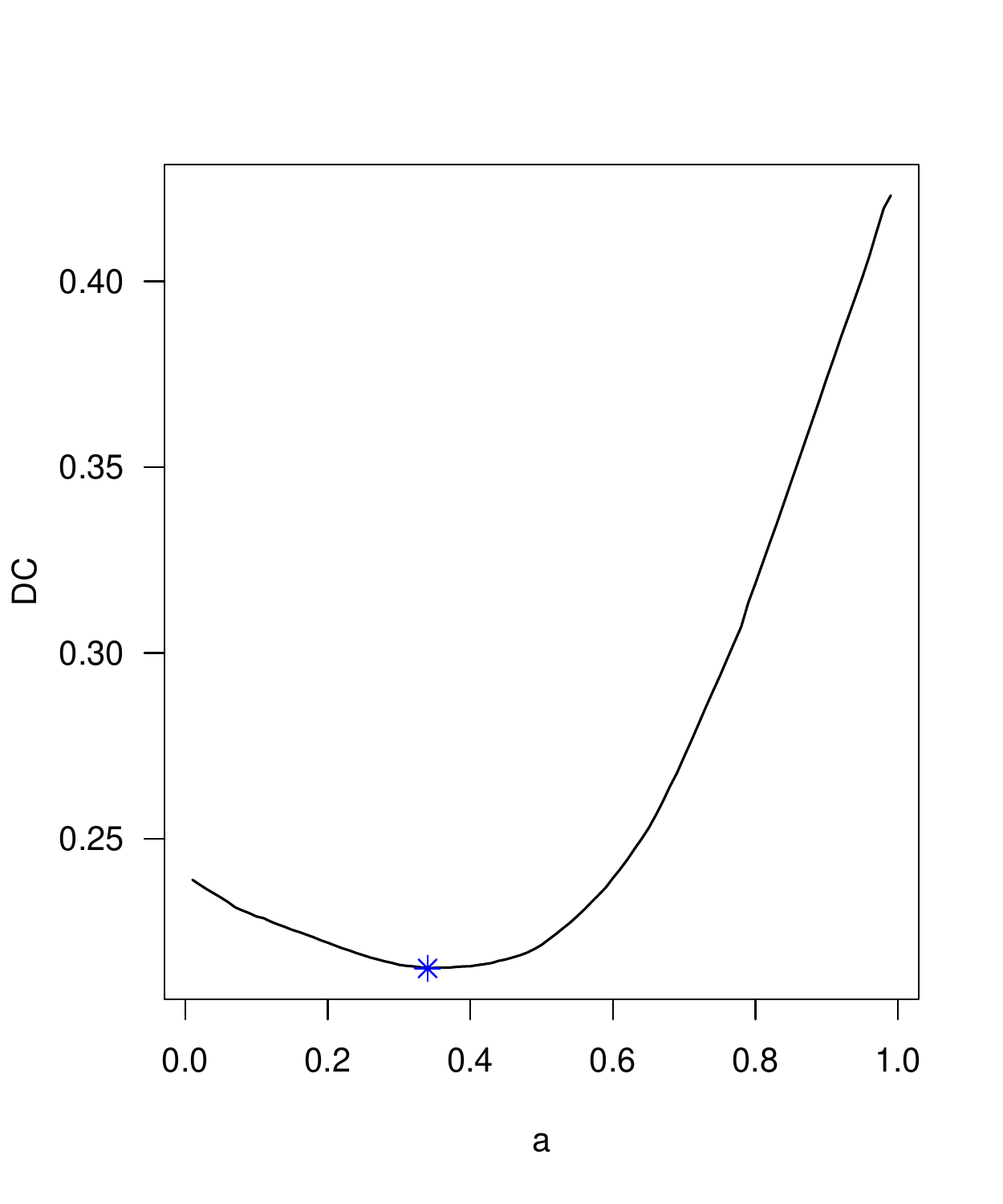}
	
	\caption{Decision criterion values for real data example \veron{on the} interval $a \in (0,1)$ by steps 0.01. The blue star corresponds to the minimum DC value which occurs at $a=0.34$.}
	
	\label{result}
	
\end{figure}

%%%%%%%%%%%%%%%%%%%%%%%%%%%%%%%%%%%%%%%%%%%%%%
In the literature, \veron{the widely used} parametric inference procedure \veron{is} based on composite likelihood methods. In particular, pairwise likelihood estimation has been found 
usefull to estimate parameters in a max-stable process. A description of this method can be found in \citep{padoan2010likelihood, doi:10.1093/biomet/asr080, bacro2016flexible} for 
spatial context.
\\

Unfortunately, parameter estimation using composite likelihood suffers from some defects. First, it can be onerous, since the computation and subsquent optimization of the objective function is 
time-consuming. Second, the choice of good initial values for optimization of the composite likelihood is essential. Third, model dependency, a preliminary step to conduct a composite likelihood 
estimation is to specify the model that describes the dependence structure. So, this mission seems to be laborious, due to the large number of combinations that can be formed from the AD and AI 
processes stemming from max-stable processes, since the variety of dependence structures that can be assumed, i.e. changing the correlation coefficient function type in Schlather model 
\citep{schlather2002models} leads to different dependence structures. So, \veron{an} unacurate choice may lead \veron{to} severe under/over estimations of probabilities associated to simultaneous 
extreme events.\\

%%%%%%%%%%%%%%%%%%%%%%%%%%%%%%%%%%%%%%%%%%%%%
In the sequel, we \veron{shall estimate} the conditional probability of having daily rainfall that exceeds some  threshold $z$ at \veron{an unused} site denoted by ${s}^*_0$ 
given that this event \veron{has occurred} at the nearest observed site which \veron{is} denoted by ${s}_0$, i.e., $\mathbb{P}[Z({s}^*_0)>z|Z({s}_0)>z]$. \veron{We compare this estimation with that} obtained by the best-fitting parametric model based on composite likelihood estimation.  
\\

For this purpose, we fitted \veron{the} daily rainfall data based on censored pairwise likelihood approach used by \citep{doi:10.1093/biomet/asr080, bacro2016flexible} where the threshold is 
taken corresponding to the 0.9 empirical quantile at each site. We fitted the generalized extreme value distribution GEV$(\mu,\sigma,\xi)$ separately to each site and then data are transformed to unit Fr\'{e}chet margins through the transformation $z\rightarrow \frac{-1}{\log( \widehat{G}(z))}$, where $\widehat{G}(.)$ is the estimated GEV cumulative distribution function. The models are \\

$ \mathbf{M_{a}}$: a MM model where $X$ is a TEG process with an exponential correlation function 
$ \rho(h) = \exp (- \lVert {h} \rVert / \phi_{X})$, $\phi_{X}>0$.  $\mathcal{A}_{X}$ is a disk of fixed and unknown radius $r_{X}$, and $Y$ is an inverted TEG process with exponential 
correlation function $ \rho(h) = \exp (- \lVert {h} \rVert / \phi_{Y})$, $\phi_{Y}>0$, and $\mathcal{A}_{Y}$ is a disk with fixed and unknown radius $r_{Y}$.\\

$\mathbf{M_{b}}$: a MM model where $X$ is a TEG process as in $ \mathbf{M_{a}}$. $Y$ is an isotropic inverted Smith process where $\Sigma$ is a diagonal matrix ($\sigma_{12}=0$) with $\sigma_{11}^{2}=\sigma_{22}^{2}=\phi_{Y}^{2}$, i.e., $\gamma(h)= ( \lVert {h} \rVert / \phi_{Y})$.\\

$\mathbf{M_{c}}$: a MS TEG process described as $X$ in $ \mathbf{M_{a}}$.\\

$\mathbf{M_{d}}$: a MS isotropic Smith process where $\Sigma$ is a diagonal matrix ($\sigma_{12}=0$) with $\sigma_{11}^{2}=\sigma_{22}^{2}=\phi_{X}^{2}$, i.e., $\gamma(h)= ( \lVert {h} \rVert / \phi_{X})$.\\ 

$\mathbf{M_{e}}$: the inverted Smith process described as $Y$ in $ \mathbf{M_{b}}$.\\

The composite likelihood information criterion (CLIC) \citep{varin2005note}, defind as CLIC= $-2 [p\ell(\widehat\vartheta) - tr\{\mathcal{J}(\widehat\vartheta)\mathcal{H}^{-1}(\widehat\vartheta)\}]$ is used to 
judge the best-fitting model. Here, the maximum pairwise likelihood estimator is denoted by $\widehat\vartheta$. Lower values of CLIC indicate \veron{a} better fit. Our results are summarised in Table \ref{Composite1}. The best-fitting model for the data, as judged by CLIC, is the 
hybrid dependence model $\mathbf{M_{b}}$. 
\begin{table}[H]\centering
	
	\begin{tabular}{@{}l l l l l l l@{}}
		
		\bottomrule
		Model &${\widehat{\phi}}_{X}$  & ${\widehat{r}}_{X}$& $\widehat{{a}}$ & ${\widehat{\phi}}_{Y}$ &${\widehat{r}}_{Y}$& CLIC  \\
		
		\bottomrule
		$ \mathbf{M_{a}}$& 254.66 (179.69)& 683.32 (257.18) & 0.59 (0.21)&1609.42 (141.53) &981.73 (168.01) & 4515964 \\
		
		$ \mathbf{M_{b}}$& 93.16 (48.02)& 166.92 (80.86) & 0.27 (0.14) &971.65 (243.29)&- & ${4515911}^{*}$\\ 
		$ \mathbf{M_{c}}$ &188.49 (53.27)& 691.12 (215.53) &-&-&-&4523182 \\              
		$ \mathbf{M_{d}}$& 463.52 (216.54) & -&-&- &-&4523446\\
		$ \mathbf{M_{e}}$ &-& - &-&628.38 (86.54)  &-& 4515981\\
		\bottomrule

	\end{tabular}
	\caption{Parameter estimates of selected dependence models fitted to the daily rainfall data. The composite likelihood criterion (CLIC) and standard errors reported between parentheses. (*) indicates to the lower CLIC.}
	
	\label{Composite1}
\end{table}
%%%%%%%%%%%% End paramteric %%%%%%%%%%%%%%

\veron{Now, we shall use our least square estimations of $a$, $\theta_X$ and $\theta_Y$ in order to estimate the conditional probabilities. }%Exploiting our decision about the mixing coefficient $a$, 
%it possible to predict the conditional probability $\mathbb{P}[Z({s}^*_0)>z|Z({s}_0+h_{0})>z]$. Rather than using the estimated bivariate joint distribution function of the process $Z(.)$ given in 
%(\ref{survivor function}), we deduced the following lemma for computing the exact conditional probabilities. 
\veron{The following lemma is easily deduced from (\ref{Biv MM})}; see the proof in \nameref{sec:Appendix.C}.

\begin{lemma} 
	\label{lemma}
	\veron{Let $Z$ be a max-mixture process. Its bivariate tail distribution is given by  
		\begin{equation}
			\mathbb{P}[Z({s}^*_0)>z|Z({s}_0)>z]= \frac{1- 2 e^{-\frac{1}{z}}+ e^ {-\frac{a \theta_{X}(h_0) }{z}}\left\{-1+ 2 e ^ {-\frac{1-a}{z}}+ \left[{1-e ^ 
					{-\frac{1-a}{z}}}\right]^{\theta_{Y}(h_0)}\right\}}{1-e^{-\frac{1}{z}}}
			\label{conditional prob}
		\end{equation} 
	}
\end{lemma}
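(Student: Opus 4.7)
The plan is to start from the definition of conditional probability and reduce everything to the bivariate CDF of $Z$ already given in (\ref{Biv MM}), which factors as a product of a max-stable contribution and an inverted max-stable contribution.

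First I would write
\begin{equation*}
\mathbb{P}[Z(s^*_0)>z\mid Z(s_0)>z] = \frac{\mathbb{P}[Z(s^*_0)>z,\, Z(s_0)>z]}{\mathbb{P}[Z(s_0)>z]}
\end{equation*}
and use the standard inclusion--exclusion identity
\begin{equation*}
\mathbb{P}[Z(s^*_0)>z,\, Z(s_0)>z] = 1 - 2\,\mathbb{P}[Z(s_0)\leq z] + \mathbb{P}[Z(s^*_0)\leq z,\, Z(s_0)\leq z],
\end{equation*}
exploiting the fact that $Z$ is stationary with common marginal law. The marginal law of $Z$ is unit Fr\'echet, since the independence of $X$ and $Y$ combined with $\mathbb{P}[aX(s)\leq z]=e^{-a/z}$ and $\mathbb{P}[(1-a)Y(s)\leq z]=e^{-(1-a)/z}$ gives $\mathbb{P}[Z(s)\leq z]=e^{-1/z}$. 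This immediately produces the denominator $1-e^{-1/z}$ and the leading terms $1-2e^{-1/z}$ of the numerator.

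Next I would evaluate the bivariate CDF $\mathbb{P}[Z(s_0)\leq z, Z(s^*_0)\leq z]$ using (\ref{Biv MM}) at $z_1=z_2=z$. The max-stable factor gives
\begin{equation*}
\mathbb{P}\bigl[X(s)\leq z/a,\, X(s+h_0)\leq z/a\bigr] = \exp\!\left\{-\tfrac{a}{z}V_X(1,1)\right\}=\exp\!\left\{-\tfrac{a\,\theta_X(h_0)}{z}\right\},
\end{equation*}
using homogeneity of order $-1$ of $V_X$ and the definition of the extremal coefficient. For the inverted max-stable factor at $y=z/(1-a)$, I would apply the bivariate CDF of $Y$ given just after (\ref{IMS}), noting that $V_Y[\omega(y),\omega(y)]=\theta_Y(h_0)/\omega(y)=-\theta_Y(h_0)\log(1-e^{-1/y})$, so that $\exp\{-V_Y[\omega(y),\omega(y)]\}=(1-e^{-1/y})^{\theta_Y(h_0)}$. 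This yields
\begin{equation*}
\mathbb{P}\bigl[Y(s)\leq \tfrac{z}{1-a},\, Y(s+h_0)\leq \tfrac{z}{1-a}\bigr] = -1 + 2e^{-(1-a)/z}+\bigl(1-e^{-(1-a)/z}\bigr)^{\theta_Y(h_0)}.
\end{equation*}

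Finally I would multiply the two factors and substitute back into the inclusion--exclusion formula; collecting terms reproduces exactly the numerator appearing in (\ref{conditional prob}), and dividing by $1-e^{-1/z}$ finishes the proof. There is no genuine obstacle here: the only place where one must be careful is in handling the inverted max-stable factor, specifically in identifying $V_Y[\omega(y),\omega(y)]$ with $-\theta_Y(h_0)\log(1-e^{-1/y})$; once that identification is made, everything reduces to bookkeeping.
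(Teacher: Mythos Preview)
Your proof is correct and follows essentially the same route as the paper: conditional probability, inclusion--exclusion to pass from the joint survivor to the joint CDF, factorization of the bivariate CDF of $Z$ via (\ref{Biv MM}), and evaluation of each factor using the homogeneity of $V_X$ and the identification $\exp\{-V_Y[\omega(y),\omega(y)]\}=(1-e^{-1/y})^{\theta_Y(h_0)}$. Your write-up is in fact slightly cleaner than the paper's, which contains a spurious intermediate line equating $\mathbb{P}[Z(s_1)>z,Z(s_2)>z]$ to a product of survivor functions of $X$ and $Y$ (a step that is not literally correct, though the final expression there is reached by the same inclusion--exclusion argument you use).
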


where $h_{0}$ is the separation distance between ${s}^*_0$ and ${s}_0$. \veron{Equation (\ref{conditional prob}) may be used} to estimate $\mathbb{P}[Z({s}^*_0)>z|Z({s}_0)>z]$ using both parametric and nonparametric approaches. \veron{We consider ${s}^*_0$ as the three unused} 
sites that have been marked by colored numbers \{1,2,3\} on the map Figure \ref{Map}. 
\\

The threshold $z$ is taken corresponding to the $q-$ emperical quantile at the site $s_0$, $q \in (0,1)$. For estimating $\mathbb{P}[Z({s}^*_0)>z|Z({s}_0)>z]$ nonparametrically, we fitted the data 
again by NLS procedure with the best mixing coefficient $a = 0.34$ and we obtained the estimators  $\widehat{\theta}_{X}(h_0)$ and $\widehat{ \theta}_{Y}(h_0)$  by averaging the values of 
$\widehat{\theta}_{X}(h)$ and $\widehat{ \theta}_{Y}(h)$ , where  $ h \in [h_{0}-10, h_{0}+10]$km \veroo{by taking the advantage of stationarity and isotropy of our data} and $h_{0} \in \{54.226,92.534,133.673\}$km.  While, for estimating $\mathbb{P}[Z({s}^*_0)>z|Z({s}_0)>z]$ 
parametrically, $\widehat{\theta}_{X}(h)$ and $\widehat{ \theta}_{Y}(h)$ are obtained by substituting the separating distance $h_0$ and the estimated parameters from hybrid model $ \mathbf{M_{b}}$ Table \ref{Composite1}. % Table 
%\ref{Composite1} in the extremal coefficient functions described in section 2.
\\

In order to compare the results obtained by the \veron{two} approaches, we used the data at the \veron{unused}  three sites to compute the empirical version of conditional probabilities $\mathbb{P}[Z({s}^*_0)>z|Z({s}_0)>z]$. \veron{Below are the resulting P-P plots.} 
\\

%%%%%%%%%%%%%%%%%%%%%%%%%%%%%%%%%%%%%%%%%%%

\begin{figure} [H]
	
	\includegraphics[width=0.8\linewidth, height=8cm]{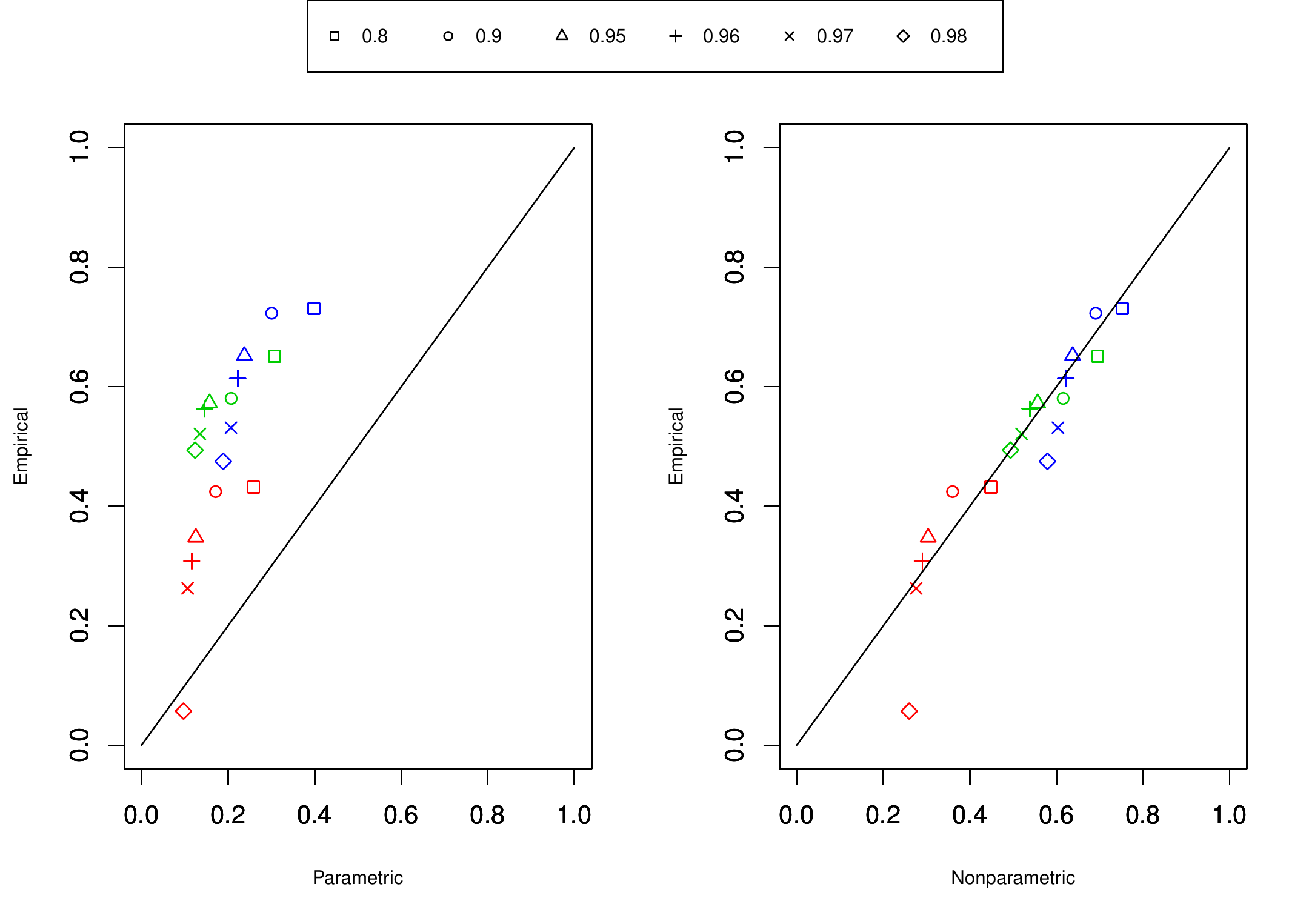}
	
	\caption{Diagnostic P-P plots for threshold excess conditional probabilities for the three \veron{unused} sites \{1,2,3\} on the map Figure \ref{Map}  obtained by both approaches. The best 
		parametric model $ \mathbf{M_{b}}$ as judged by the CLIC and our nonparametric approach. Green: site 1; red: site 2; blue: site 3.}
	
	\label{results}
	
\end{figure}
%%%%%%%%%%%%%%%%%%%%%%%%%%%%%%%%%%%%%

Generally, Figure \ref{results} shows that our nonparametric approach outperforms the parametric one for predicting $\mathbb{P}[Z({s}^*_0)>z|Z({s}_0)>z]$.   One of the justifications for this 
situation is that with the parametric model we have to specify a model that describes the dependence structure, and listing all choices seems to be a tedious task due to the large number of 
possibilities that can emerge from the AD and AI processes stemming from max-stable processes, i.e., different choices of correlation coefficient functions for the same model leads to different 
models. So, in this case the inaccurate choice/guess of models to be fitted may lead \veron{to} severe under/over estimation of probabilities associated to simultaneous extreme events. \\

%%%%%%%%%%%%%%%%%%%%%%%%%%%%%%%%%%%%%%%%%%%%
\section{Conclusion}

In this paper, we have proposed a statistically efficient nonparametric model-free selection criterion. We can exploit our decision about the mixing coefficient to predict conditional probabilities of 
daily rainfall at unobserved sites depending on the dependence structure in the analyzed data. 
\\

The \veron{main} advantage of our approach is \veron{that it is {\em model free}}, unlike the parametric approach which assumes a specified model, so the risk of 
unaccurate choice of stochastic processes for describing the joint tail distribution may lead to severe under/over estimation of probabilities associated to 
simultaneous extreme events. 
\\

We have shown in our real data example  that the max-mixture approach appears of interest for modeling environmental data. In particular it has the eligibility to overcome the limitations of the max-stable models in which only asymptotic dependence or exact independence can be modeled.

\section*{Appendix A. }\label{sec:Appendix.A}

%%%%%%%%%%%%%%%%%%%%%%%%%%%%%%%%%%%%%%%%%%%%%%%%%%%%%%%%%%%%%%
\begin{figure} [H]
	\includegraphics[width=0.99\linewidth, height=10cm]{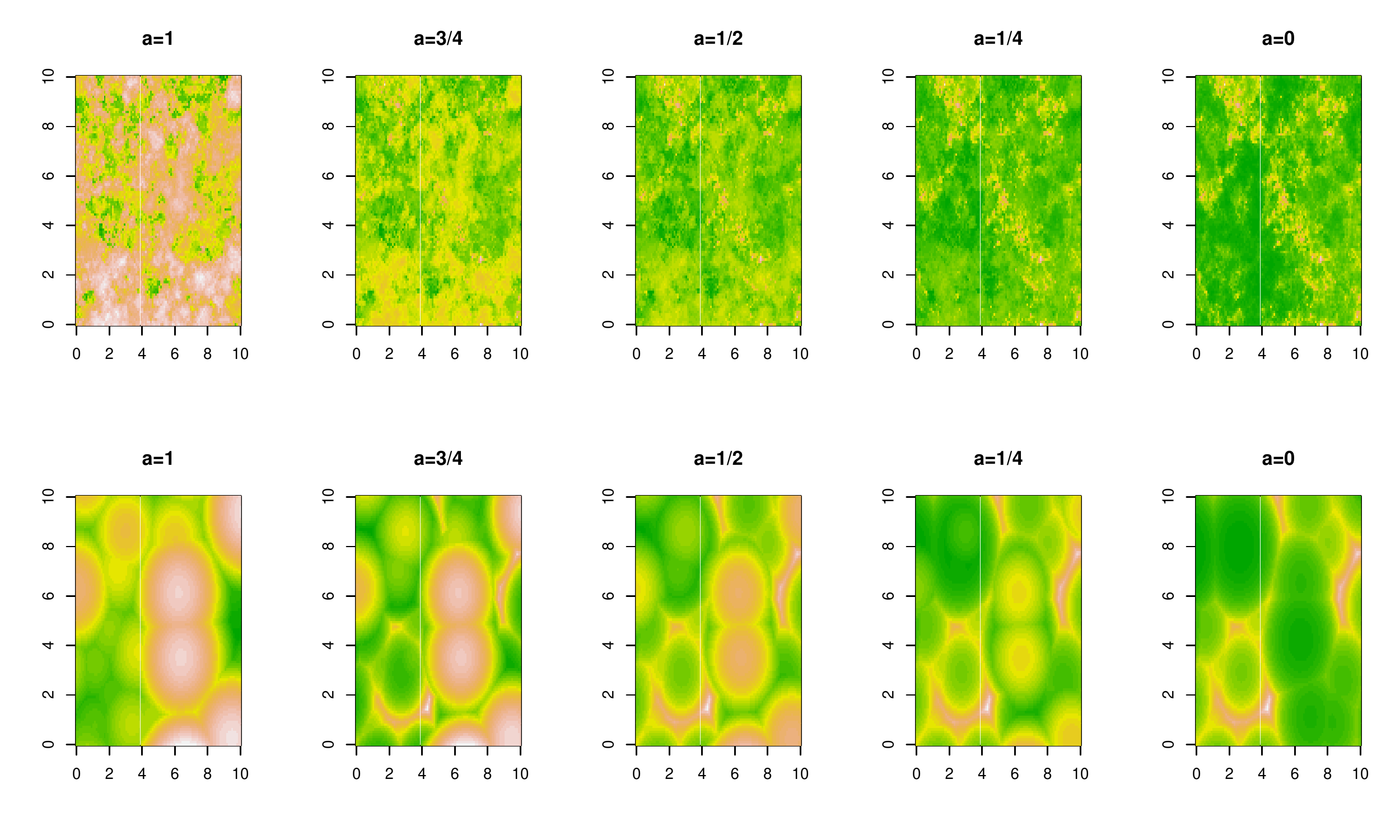}
	
	\caption{Simulations of the max-mixture model (\ref{max-mixture}) on the logarithm scale according different values of mixing coefficient $a \in \{1,0.75,0.5,0.25,0\}$. Top row: $X$ is isotropic extremal$-t$ with $v_1=1$ degrees of freedom and $\rho(h)=\exp(- h)$, $Y$ is isotropic inverted extremal$-t$ process with $v_2=2$ degrees of freedom and $\rho(h)=\exp(- h/1.5)$. bottom row: $X$ is isotropic Brown-Resnick with variogram $2\gamma(h)= h^{2}$, $X$ is isotropic inverted Brown-Resnick with variogram $2\gamma(h)= (h/1.5)^{2}$.  }
	\label{SimulationApp}	
\end{figure}

%%%%%%%%%%%%%%%%%%%%%%%%%%%%%%%%%%%%%%%%%%%%%%%%%%%%%%%%%%%%%%%	
\begin{figure} [H]
	
	\includegraphics[width=0.99\linewidth, height=5cm]{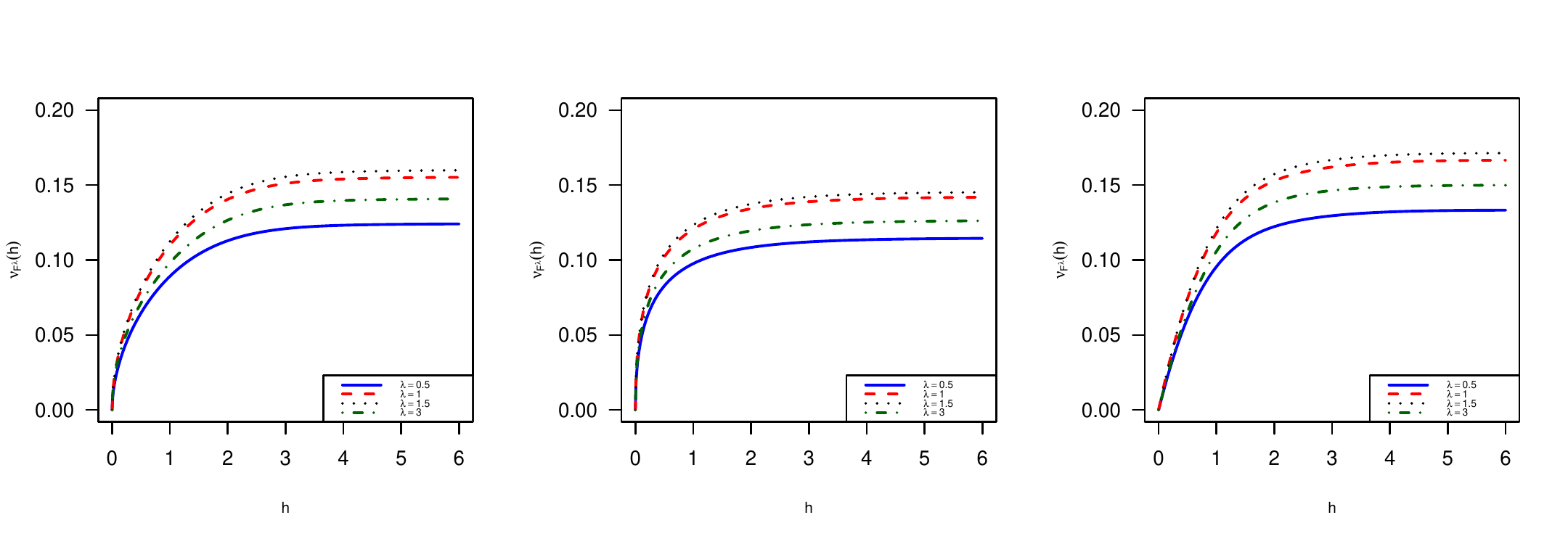}
	
	\caption{Theoretical $F^{\lambda} -$madogram functions \ref{madogram formula} with $\lambda \in \{0.5, 1, 1.5, 3\}$. \textbf{Left panel}: Max-mixture model in which in which $X$ is isotropic Brown-Resnick with variogram $2\gamma(h)= (h/0.5)^{2}$, and $Y$ is isotropic inverted extremal$-t$ process with $v=1$ degrees of freedom and $\rho(h)=\exp(- h/1.5)$. \textbf{Middle panel}: Max-mixture model in which $X$ is isotropic extremal$-t$ with $v_1=1$ degrees of freedom and $\rho(h)=\exp(- h)$, $Y$ is isotropic inverted extremal$-t$ process with $v_2=2$ degrees of freedom and $\rho(h)=\exp(- h/1.5)$.  \textbf{Right panel}: Max-mixture model in which $X$ is isotropic Brown-Resnick with variogram $2\gamma(h)= (h/0.2)^{2}$, $Y$ is isotropic inverted Brown-Resnick with variogram $2\gamma(h)= h^{2}$.}
	
	\label{TheorticalApp}
	
\end{figure}

%%%%%%%%%%%%%%%%%%%%%%%%%%%%%%%%%%%%%%%%%%%%
\section*{Appendix B. } \label{sec:Appendix.B}
Supplementary Material related to this article can be found online at \href{url}{http://math.univ-lyon1.fr/homes-www/abuawwad/Supplementary/}. 

%%%%%%%%%%%%%%%%%%%%%%%%%%%%%%%%%%%%%%%%%%%%
\section*{Appendix C. } \label{sec:Appendix.C}

\em Proof of Lemma \ref{lemma}.  Denoting the joint survivor function of the process $Z$ (\ref{max-mixture}) $\mathbb{P}[Z({s}_{1})>x,Z({s}_{2})>y]$ by $\bar{G}_{Z}(x,y)$, we may write\\
\begin{equation*}
	\bar{G}_Z(x,y) = 1- G_{X}(x)-G_{Y}(y)+G_{Z}(x,y)
\end{equation*}

where $G_{X}(x)$, $G_{Y}(y)$ are the marginal probability distribution functions of processes $X(.)$ and $Y(.)$ respectively in model (\ref{max-mixture}) and $G_{Z}(x,y)$ is the bivariate probability distribution function of the stochastic process $Z(.)$ in the same model. using (\ref{Biv MM}) and (\ref{distribution MM}), setting $x=y=z$
\begin{align*}
	\mathbb{P}[Z({s}_1)>z|Z({s}_2)>z]&= \frac{\mathbb{P}[Z({s}_1)>z,Z({s}_2)>z]}{\mathbb{P}[Z(s_2)>z]}\\
	&=\frac{\mathbb{P}\left(X({s}_1)> \frac{z}{\ a}, X(s_2)> \frac{z}{\ a}\right) \mathbb{P}\left(Y({s}_1)>\frac{z}{1-\ a},Y(s_2)>\frac{z}{1-\ a}\right)}{1-\mathbb{P}[Z(s_2)\leq z]}\\
	&=\frac{1- 2e^{-\frac{1}{z}}+ e^ {- a V_{X}(z,z)}\left\{-1+ 2 e ^ {-\frac{1-a}{z}}+ e^ {[-V_{Y}\left[\omega\left(\frac{1-a}{z}\right),\omega\left(\frac{1-a}{z}\right)\right] } \right\}}{1-e^{-\frac{1}{z}}}
\end{align*}
where $\omega\left(\frac{1-a}{z}\right)= -1/\log \left[1- e^{-\frac{1-a}{z}} \right]$, taking the advantage of the homogeneity of order $-1$ of the exponent measures $V_{X}(.)$ and $V_{Y}(.)$; we have $V_{X}(z,z)= \theta_{X}/z$,  $e^{[-V_{Y}\left[\omega\left(\frac{1-a}{z}\right),\omega\left(\frac{1-a}{z}\right)\right]}= \left[{1-e ^ {-\frac{1-a}{z}}}\right]^{\theta_{Y}}$ and this gives (\ref{conditional prob}).
% \\
\\

%%%%%%%%%%%%%%%%%%%%%%%%%%%%%%%%%%%%%%%%%%%%

	\bibliographystyle{plainnat}
	\bibliography{Abd}

\begin{thebibliography}{28}
\providecommand{\natexlab}[1]{#1}
\providecommand{\url}[1]{\texttt{#1}}
\expandafter\ifx\csname urlstyle\endcsname\relax
  \providecommand{\doi}[1]{doi: #1}\else
  \providecommand{\doi}{doi: \begingroup \urlstyle{rm}\Url}\fi

\bibitem[Abu-Awwad et~al.(2017)Abu-Awwad, Maume-Deschamps, and Ribereau]{abd}
Abdul-Fattah Abu-Awwad, Véronique Maume-Deschamps, and Pierre Ribereau.
\newblock Censored pairwise likelihood-based tests for mixing coefficient of
  spatial max-mixture models.
\newblock \emph{submitted for publication}, 2017.

\bibitem[Ahmed et~al.(2017)Ahmed, Maume-Deschamps, Ribereau, and Vial]{manaf}
Manaf Ahmed, Véronique Maume-Deschamps, Pierre Ribereau, and Céline Vial.
\newblock A semi-parametric estimation for max-mixture spatial processes.
\newblock \emph{submitted for publication}, 2017.

\bibitem[Antoniadis et~al.(1992)Antoniadis, Berruyer, and
  Carmona]{antoniadis1992regression}
Anestis Antoniadis, Jacques Berruyer, and Ren{\'e} Carmona.
\newblock \emph{R{\'e}gression non lin{\'e}aire et applications}.
\newblock Economica, 1992.

\bibitem[Bacro et~al.(2016)Bacro, Ga\"{e}tan, and
  Toulemonde]{bacro2016flexible}
Jean-No\"{e}l Bacro, Carlo Ga\"{e}tan, and Gwladys Toulemonde.
\newblock A flexible dependence model for spatial extremes.
\newblock \emph{Journal of Statistical Planning and Inference}, 172:\penalty0
  36--52, 2016.

\bibitem[Bel et~al.(2008)Bel, Bacro, and Lantu{\'e}joul]{bel2008assessing}
Liliane Bel, Jean-No\"{e}l Bacro, and Christian Lantu{\'e}joul.
\newblock Assessing extremal dependence of environmental spatial fields.
\newblock \emph{Environmetrics}, 19\penalty0 (2):\penalty0 163--182, 2008.

\bibitem[Brown and Resnick(1977)]{brown1977extreme}
Bruce~M Brown and Sidney~I Resnick.
\newblock Extreme values of independent stochastic processes.
\newblock \emph{Journal of Applied Probability}, 14\penalty0 (4):\penalty0
  732--739, 1977.

\bibitem[Coles et~al.(1999)Coles, Heffernan, and Tawn]{coles1999dependence}
Stuart Coles, Janet Heffernan, and Jonathan Tawn.
\newblock Dependence measures for extreme value analyses.
\newblock \emph{Extremes}, 2\penalty0 (4):\penalty0 339--365, 1999.

\bibitem[Cooley et~al.(2006)Cooley, Naveau, and Poncet]{cooley2006variograms}
Dan Cooley, Philippe Naveau, and Paul Poncet.
\newblock Variograms for spatial max-stable random fields.
\newblock \emph{Dependence in probability and statistics}, pages 373--390,
  2006.

\bibitem[Davison and Gholamrezaee(2012)]{davison2012geostatistics}
Anthony~C Davison and Mehdi~Mohammad Gholamrezaee.
\newblock Geostatistics of extremes.
\newblock 468:\penalty0 581--608, 2012.

\bibitem[Davison et~al.(2013)Davison, Huser, and
  Thibaud]{davison2013geostatistics}
Anthony~C Davison, Rapha{\"e}l Huser, and Emeric Thibaud.
\newblock Geostatistics of dependent and asymptotically independent extremes.
\newblock \emph{Mathematical Geosciences}, 45\penalty0 (5):\penalty0 511--529,
  2013.

\bibitem[De~Haan(1984)]{de1984spectral}
Laurens De~Haan.
\newblock A spectral representation for max-stable processes.
\newblock \emph{The annals of probability}, pages 1194--1204, 1984.

\bibitem[De~Haan and Pereira(2006)]{de2006spatial}
Laurens De~Haan and Teresa~T Pereira.
\newblock Spatial extremes: Models for the stationary case.
\newblock \emph{The annals of statistics}, pages 146--168, 2006.

\bibitem[Joe(1993)]{joe1993parametric}
Harry Joe.
\newblock Parametric families of multivariate distributions with given margins.
\newblock \emph{Journal of multivariate analysis}, 46\penalty0 (2):\penalty0
  262--282, 1993.

\bibitem[Kabluchko et~al.(2009)Kabluchko, Schlather, and
  De~Haan]{kabluchko2009stationary}
Zakhar Kabluchko, Martin Schlather, and Laurens De~Haan.
\newblock Stationary max-stable fields associated to negative definite
  functions.
\newblock \emph{The Annals of Probability}, pages 2042--2065, 2009.

\bibitem[Ledford and Tawn(1996)]{ledford1996statistics}
Anthony~W Ledford and Jonathan~A Tawn.
\newblock Statistics for near independence in multivariate extreme values.
\newblock \emph{Biometrika}, 83\penalty0 (1):\penalty0 169--187, 1996.

\bibitem[Ledford and Tawn(1997)]{ledford1997modelling}
Anthony~W Ledford and Jonathan~A Tawn.
\newblock Modelling dependence within joint tail regions.
\newblock \emph{Journal of the Royal Statistical Society: Series B (Statistical
  Methodology)}, 59\penalty0 (2):\penalty0 475--499, 1997.

\bibitem[Matheron(1987)]{matheron1987suffit}
Georges Matheron.
\newblock Suffit-il, pour une covariance, d\^{a}{\^e}tre de type positif.
\newblock \emph{Sciences de la Terre, s{\'e}rie informatique g{\'e}ologique},
  26:\penalty0 51--66, 1987.

\bibitem[Naveau et~al.(2009)Naveau, Guillou, Cooley, and
  Diebolt]{doi:10.1093/biomet/asp001}
Philippe Naveau, Armelle Guillou, Daniel Cooley, and Jean Diebolt.
\newblock Modelling pairwise dependence of maxima in space.
\newblock \emph{Biometrika}, 96\penalty0 (1):\penalty0 1, 2009.
\newblock \doi{10.1093/biomet/asp001}.
\newblock URL \url{+ http://dx.doi.org/10.1093/biomet/asp001}.

\bibitem[Opitz(2013)]{opitz2013extremal}
Thomas Opitz.
\newblock Extremal t processes: Elliptical domain of attraction and a spectral
  representation.
\newblock \emph{Journal of Multivariate Analysis}, 122:\penalty0 409--413,
  2013.

\bibitem[Padoan et~al.(2010)Padoan, Ribatet, and Sisson]{padoan2010likelihood}
Simone~A Padoan, Mathieu Ribatet, and Scott~A Sisson.
\newblock Likelihood-based inference for max-stable processes.
\newblock \emph{Journal of the American Statistical Association}, 105\penalty0
  (489):\penalty0 263--277, 2010.

\bibitem[Ribatet and Sedki(2013)]{ribatet2013extreme}
Mathieu Ribatet and Mohammed Sedki.
\newblock Extreme value copulas and max-stable processes.
\newblock \emph{Journal de la Soci{\'e}t{\'e} Fran{\c{c}}aise de Statistique},
  154\penalty0 (1):\penalty0 138--150, 2013.

\bibitem[Ribatet et~al.(2011)Ribatet, Singleton, and
  Team]{ribatet2011spatialextremes}
Mathieu Ribatet, Richard Singleton, and R~Core Team.
\newblock Spatialextremes: modelling spatial extremes.
\newblock \emph{R package version}, pages 1--8, 2011.

\bibitem[Schlather(2002)]{schlather2002models}
Martin Schlather.
\newblock Models for stationary max-stable random fields.
\newblock \emph{Extremes}, 5\penalty0 (1):\penalty0 33--44, 2002.

\bibitem[Sibuya(1960)]{sibuya1960bivariate}
Masaaki Sibuya.
\newblock Bivariate extreme statistics, i.
\newblock \emph{Annals of the Institute of Statistical Mathematics},
  11\penalty0 (2):\penalty0 195--210, 1960.

\bibitem[Smith(1990)]{smith1990max}
Richard~L Smith.
\newblock Max-stable processes and spatial extremes.
\newblock \emph{Unpublished manuscript}, 205, 1990.

\bibitem[Thibaud et~al.(2013)Thibaud, Mutzner, and
  Davison]{thibaud2013threshold}
Emeric Thibaud, Raphael Mutzner, and Anthony~C Davison.
\newblock Threshold modeling of extreme spatial rainfall.
\newblock \emph{Water resources research}, 49\penalty0 (8):\penalty0
  4633--4644, 2013.

\bibitem[Varin and Vidoni(2005)]{varin2005note}
Cristiano Varin and Paolo Vidoni.
\newblock A note on composite likelihood inference and model selection.
\newblock \emph{Biometrika}, pages 519--528, 2005.

\bibitem[Wadsworth and Tawn(2012)]{doi:10.1093/biomet/asr080}
Jennifer~L. Wadsworth and Jonathan~A. Tawn.
\newblock Dependence modelling for spatial extremes.
\newblock \emph{Biometrika}, 99\penalty0 (2):\penalty0 253, 2012.

\end{thebibliography}
	
\end{document}